\documentclass[reqno]{amsart}
\usepackage[latin1]{inputenc}
\usepackage[pdftex]{graphicx}
\usepackage{graphics}
\usepackage{graphicx}
\usepackage{epstopdf}
\usepackage[pdftex,hyperindex]{hyperref}
\usepackage{floatflt}
\usepackage{multicol}
\usepackage[T1]{fontenc}
\usepackage{textcomp}
\usepackage{latexsym}
\usepackage{expdlist}
\usepackage{vmargin}
\usepackage{booktabs}
\usepackage{placeins}
\usepackage{array}
\usepackage{amsmath}
\usepackage{amssymb}
\usepackage{amsfonts}
\usepackage{verbatim}
\usepackage{array}
\usepackage{framed}
\usepackage{amsthm}
\usepackage{enumerate}
\usepackage{cite}
\input amssym.def
\input amssym
\theoremstyle{plain}
\newtheorem{theorem}{Theorem}[section]
\newtheorem{lemma}{Lemma}[section]

\theoremstyle{remark}
\newtheorem{remark}{Remark}[section]

\theoremstyle{definition}

\newcommand\id{\mathrm{id}}

\allowdisplaybreaks[1]

\begin{document}

\title{A new class of orthogonal polynomials}

\author{Stefan Kahler}
\address{Stefan Kahler, Fachgruppe Mathematik, RWTH Aachen University, Pontdriesch 14-16, 52062 Aachen, Germany}
\email{kahler@mathematik.rwth-aachen.de}

\author{Josef Obermaier}
\address{Josef Obermaier, Department of Mathematics, Technical University of Munich, Boltzmannstra{\ss}e 3, 85748 Garching, Germany}
\email{josef.obermaier@tum.de}

\date{\today}

\begin{abstract}
We consider random walk polynomial sequences $(P_n(x))_{n\in\mathbb{N}_0}\subseteq\mathbb{R}[x]$ given by recurrence relations of the form $P_0(x)=1$, $P_1(x)=x$ and $x P_n(x)=a_n P_{n+1}(x)+c_n P_{n-1}(x)\;(n\in\mathbb{N})$, where $a_n$ and $c_n$ are positive and sum up to $1$. $(P_n(x))_{n\in\mathbb{N}_0}$ is said to satisfy nonnegative linearization of products if the product of any two polynomials $P_m(x)$, $P_n(x)$ is a convex combination of $P_{|m-n|}(x),\ldots,P_{m+n}(x)$. This property gives rise to a hypergroup structure and a sophisticated harmonic analysis. We are interested in examples such that both the original sequence $(P_n(x))_{n\in\mathbb{N}_0}$ and the sequence $(\widetilde{P_n}(x))_{n\in\mathbb{N}_0}$ which corresponds to switched roles of $(a_n)_{n\in\mathbb{N}}$ and $(c_n)_{n\in\mathbb{N}}$ satisfy nonnegative linearization of products. Such considerations were recently started by Lasser and Obermaier and can be motivated from a harmonic analytic, combinatorial or probabilistic point of view. However, Lasser and Obermaier left open the question whether examples besides the trivial example of the Chebyshev polynomials of the first kind $(T_n(x))_{n\in\mathbb{N}_0}$ (with $a_n\equiv c_n\equiv1/2$) actually exist. We provide a sufficient criterion and explicitly construct such nontrivial examples. Moreover, we provide characterizations of $(T_n(x))_{n\in\mathbb{N}_0}$ by additionally involving properties of the duals and Haar measures. Our criterion also enables us to solve open problems concerning the Haar measure of polynomial hypergroups stated by Kahler and Szwarc.
\end{abstract}

\keywords{Orthogonal polynomials, random walk polynomials, nonnegative linearization, recurrence coefficients, Haar measure, dual space}

\subjclass[2020]{Primary 42C05; Secondary 26D07, 28C10, 43A62}

\maketitle

\numberwithin{equation}{section}

\section{Introduction}\label{sec:intro}

\subsection{Basic setting and problem under consideration}

We consider sequences $(P_n(x))_{n\in\mathbb{N}_0}\subseteq\mathbb{R}[x]$ of orthogonal polynomials which satisfy recurrence relations of the form $P_0(x)=1$, $P_1(x)=x$,
\begin{equation}\label{eq:threetermrec}
x P_n(x)=a_n P_{n+1}(x)+c_n P_{n-1}(x)\;(n\in\mathbb{N})
\end{equation}
with $(c_n)_{n\in\mathbb{N}}\subseteq(0,1)$ and $a_n\equiv1-c_n$. In the literature, such polynomials are frequently called (symmetric) `random walk polynomial sequences' (in the following `RWPS'). The theory of orthogonal polynomials, particularly Favard's theorem, yields that $(P_n(x))_{n\in\mathbb{N}_0}$ is orthogonal w.r.t. a unique probability (Borel) measure $\mu$ on $\mathbb{R}$ with $|\mathrm{supp}\;\mu|=\infty$ and $\mathrm{supp}\;\mu\subseteq[-1,1]$. We refer to \cite{Ch78} concerning standard results on orthogonal polynomials and to \cite{CSvD98,KM59,vDS93} concerning the relation to random walks (with regard to this relation, a few more comments can also be found in the following subsection). It is clear that an RWPS is normalized by $P_n(1)\equiv1$. We shall also make use of the orthonormal polynomials $(p_n(x))_{n\in\mathbb{N}_0}$ (with positive leading coefficients) which correspond to $(P_n(x))_{n\in\mathbb{N}_0}$. They are (well-known to be) given by $p_0(x)=1$, $p_1(x)=x/\alpha_1$ and $x p_n(x)=\alpha_{n+1}p_{n+1}(x)+\alpha_n p_{n-1}(x)\;(n\in\mathbb{N})$, where
\begin{equation*}
\alpha_n=\begin{cases} \sqrt{c_1}, & n=1, \\ \sqrt{c_n a_{n-1}}, & n\geq2. \end{cases}
\end{equation*}
Among the abundance of various RWPS, those which satisfy the `nonnegative linearization of products' property
\begin{equation}\label{eq:productlinear}
P_m(x)P_n(x)=\sum_{k=0}^{m+n}\underbrace{g(m,n;k)}_{\overset{!}{\geq}0}P_k(x)\;(m,n\in\mathbb{N}_0)
\end{equation}
are of particular interest because they are accompanied by a rich and sophisticated harmonic analysis. This crossing point between the theory of orthogonal polynomials and special functions, on the one hand, and functional and harmonic analysis, on the other hand, is fruitful, vivid and accompanied by recent publications \cite{FGS22,KS25,LO25,Vo23,Ya26}. Note that, as a consequence of orthogonality, the linearization coefficients in \eqref{eq:productlinear} always fulfill $g(m,n;|m-n|),g(m,n;m+n)\neq0$ and $g(m,n;k)=0$ for $k<|m-n|$ \cite{La05}. Moreover, it is an obvious consequence of symmetry that $g(m,n;k)=0$ if $m+n-k$ is odd. We particularly mention \cite[Theorem 2.2, Theorem 2.3]{LO25} concerning properties of the linearization coefficients. Again, there is an abundance of examples of RWPS which additionally satisfy nonnegative linearization of products. Nevertheless, determining the signs of the linearization coefficients $g(m,n;k)$ is generally a very nontrivial problem. Typically, the linearization coefficients $g(m,n;k)$ cannot be computed in a sufficiently explicit way. However, the literature contains some sufficient criteria for nonnegative linearization of products, and we will benefit from results of M\l{}otkowski and Szwarc \cite{MS01} in this context. In the literature, the nonnegative linearization of products property is also called `property (P)'.\\

In this paper, we consider an associated RWPS $(\widetilde{P_n}(x))_{n\in\mathbb{N}_0}$ which is given by ``switching'' the roles of $(a_n)_{n\in\mathbb{N}}$ and $(c_n)_{n\in\mathbb{N}}$ in \eqref{eq:threetermrec}, so $(\widetilde{P_n}(x))_{n\in\mathbb{N}_0}$ satisfies the recurrence relation $\widetilde{P_0}(x)=1$, $\widetilde{P_1}(x)=x$,
\begin{equation}\label{eq:threetermrecswitched}
x\widetilde{P_n}(x)=c_n\widetilde{P_{n+1}}(x)+a_n\widetilde{P_{n-1}}(x)\;(n\in\mathbb{N}).
\end{equation}
We are interested in the question when both $(P_n(x))_{n\in\mathbb{N}_0}$ and $(\widetilde{P_n}(x))_{n\in\mathbb{N}_0}$ satisfy nonnegative linearization of products, which was priorly considered by Lasser and Obermaier in the recent publication \cite{LO25}. It is clear that there is a trivial example, namely $a_n\equiv c_n\equiv1/2$, which corresponds to the Chebyshev polynomials of the first kind $(T_n(x))_{n\in\mathbb{N}_0}$ and the orthogonalization measure $1/\pi\cdot(1-x^2)^{-1/2}\chi_{(-1,1)}(x)\,\mathrm{d}x$ (one has $T_m(x)T_n(x)=T_{|m-n|}(x)/2+T_{m+n}(x)/2$ \cite{La83,La05}). The question whether there are further examples such that both $(P_n(x))_{n\in\mathbb{N}_0}$ and $(\widetilde{P_n}(x))_{n\in\mathbb{N}_0}$ satisfy nonnegative linearization of products or whether $(T_n(x))_{n\in\mathbb{N}_0}$ is characterized by this remarkable property was left open in \cite{LO25}.\\

Let us first briefly recall the above-mentioned harmonic analysis in some more detail \cite{La83,La05}. If $(P_n(x))_{n\in\mathbb{N}_0}$ satisfies nonnegative linearization of products, then the convolution $(m,n)\mapsto\sum_{k=|m-n|}^{m+n}g(m,n;k)\delta_k$ and the identity as involution make $\mathbb{N}_0$ a commutative discrete hypergroup with unit element $0$, a `polynomial hypergroup'.\footnote{We refer to standard literature like \cite{BH95,Je75,La23} concerning the precise definition of hypergroups in the sense of Jewett. The general axioms considerably simplify for discrete hypergroups \cite{La05,La23}. Roughly speaking, the difference between a locally compact group and a hypergroup is that in the latter case the convolution of two Dirac measures does not have to be a Dirac measure again but is allowed to be a more general probability measure (with certain non-degeneracy and compatibility properties).} The Haar function $h:\mathbb{N}_0\rightarrow[1,\infty)$,
\begin{equation}\label{eq:hdef}
h(n):=\frac{1}{\int_\mathbb{R}\!P_n^2(x)\,\mathrm{d}\mu(x)}=\frac{1}{g(n,n;0)}=\begin{cases} 1, & n=0, \\ \frac{1}{c_1}, & n=1, \\ \prod_{k=1}^n\frac{a_{k-1}}{c_k}, & n\geq2 \end{cases}
\end{equation}
has the following property: if, for any function $f\in\ell^1(h):=\{f:\mathbb{N}_0\rightarrow\mathbb{C}:\sum_{k=0}^\infty|f(k)|h(k)<\infty\}$ and any $n\in\mathbb{N}_0$, one considers the translation $T_n f:\mathbb{N}_0\rightarrow\mathbb{C}$, $T_n f(m):=\sum_{k=|m-n|}^{m+n}g(m,n;k)f(k)$, then $T_n f\in\ell^1(h)$ and $\sum_{k=0}^{\infty}T_n f(k)h(k)=\sum_{k=0}^{\infty}f(k)h(k)$. Endowed with the norm $\left\|f\right\|_1:=\sum_{k=0}^\infty|f(k)|h(k)$, with the convolution $f\ast g(n):=\sum_{k=0}^\infty T_n f(k)g(k)h(k)$ and with complex conjugation as involution, $\ell^1(h)$ becomes a semisimple commutative unital Banach $\ast$-algebra. The structure space $\Delta(\ell^1(h))$ can be identified with the compact set
\begin{equation*}
\mathcal{X}^b(\mathbb{N}_0):=\left\{z\in\mathbb{C}:\sup_{n\in\mathbb{N}_0}|P_n(z)|<\infty\right\}=\left\{z\in\mathbb{C}:\max_{n\in\mathbb{N}_0}|P_n(z)|=1\right\}
\end{equation*}
via $\mathcal{X}^b(\mathbb{N}_0)\rightarrow\Delta(\ell^1(h))$, $z\mapsto\varphi_z$ with $\varphi_z(f):=\sum_{k=0}^\infty f(k)\overline{P_k(z)}h(k)$, and in the same way the Hermitian structure space $\Delta_s(\ell^1(h))$ can be identified with
\begin{equation*}
\widehat{\mathbb{N}_0}:=\mathcal{X}^b(\mathbb{N}_0)\cap\mathbb{R}.
\end{equation*}
The orthogonalization measure $\mu$ can be regarded as Plancherel measure, and one always has $\{\pm1\}\cup\mathrm{supp}\;\mu\subseteq\widehat{\mathbb{N}_0}\subseteq[-1,1]$. However, in contrast to the group case the three dual objects $\mathrm{supp}\;\mu$, $\widehat{\mathbb{N}_0}$ and $\mathcal{X}^b(\mathbb{N}_0)$ can differ. We will also use that $\mathrm{supp}\;\mu=\sigma(T_1)$, where $T_1:\ell^2(h)\rightarrow\ell^2(h)$, $f\mapsto T_1f$ is regarded as a (bounded linear) operator on $\ell^2(h):=\{f:\mathbb{N}_0\rightarrow\mathbb{C}:\left\|f\right\|_2<\infty\}$ with $\left\|f\right\|_2:=\sqrt{\sum_{k=0}^\infty|f(k)|^2h(k)}$.\footnote{$T_1f$ for $f\in\ell^2(h)$ is defined like for $f\in\ell^1(h)$ above; in fact, the definition makes sense for every $f:\mathbb{N}_0\rightarrow\mathbb{C}$.}\\

We now come back to the central problem of this paper, namely whether there are nontrivial examples such that both $(P_n(x))_{n\in\mathbb{N}_0}$ and $(\widetilde{P_n}(x))_{n\in\mathbb{N}_0}$ satisfy nonnegative linearization of products. In the following, we use appropriate notation and write $\widetilde{h}$, $\widetilde{\widehat{\mathbb{N}_0}}$ and so on. The following result of Lasser and Obermaier, which is \cite[Theorem 4.3, Theorem 4.4]{LO25} and based on an analysis of the coefficients $g(m,n;|m-n|+2)$ and $g(m,n;m+n-2)$, provides a condition which potential examples must satisfy:

\begin{theorem}\label{thm:lasserobermaier}
If both $(P_n(x))_{n\in\mathbb{N}_0}$ and $(\widetilde{P_n}(x))_{n\in\mathbb{N}_0}$ satisfy nonnegative linearization of products and $(P_n(x))_{n\in\mathbb{N}_0}\neq(T_n(x))_{n\in\mathbb{N}_0}$, then either $c_{2n+1}>c_1>1/2$ and $c_{2n}<1/2$ for all $n\in\mathbb{N}$ or $c_{2n+1}<c_1<1/2$ and $c_{2n}>1/2$ for all $n\in\mathbb{N}$. In particular, $(c_n)_{n\in\mathbb{N}}$ cannot converge in this case.
\end{theorem}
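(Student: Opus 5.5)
The plan is to extract necessary conditions from the two families of "near-extremal" linearization coefficients $g(m,n;m+n-2)$ and $g(m,n;|m-n|+2)$, computed for both $(P_n(x))_{n\in\mathbb{N}_0}$ and $(\widetilde{P_n}(x))_{n\in\mathbb{N}_0}$, and then to add and subtract the resulting inequalities. I would write $c_n=1/2+d_n$, so $a_n=1/2-d_n$. The switch $(P_n)\leftrightarrow(\widetilde{P_n})$ is then simply $d_n\mapsto-d_n$, and the Chebyshev case is $d\equiv0$.

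The simplest instance is $m=2$. Since $P_2=(x^2-c_1)/a_1$ and
\begin{equation*}
x^2P_n=a_na_{n+1}P_{n+2}+(a_nc_{n+1}+c_na_{n-1})P_n+c_nc_{n-1}P_{n-2}\quad(n\geq2),
\end{equation*}
the coefficient $g(2,n;n)$ is a positive multiple of $a_nc_{n+1}+c_na_{n-1}-c_1$. In terms of $d$ this reads
\begin{equation*}
\tfrac{1}{2}(d_{n+1}-d_{n-1})-d_n(d_{n+1}+d_{n-1})-d_1\geq0 .
\end{equation*}
The switched sequence gives the same inequality with every $d$ replaced by $-d$. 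Adding the two yields $d_n(d_{n+1}+d_{n-1})\leq0$. Combining them more carefully yields the two-sided bound
\begin{equation*}
\Bigl|\tfrac{1}{2}(d_{n+1}-d_{n-1})-d_1\Bigr|\leq-d_n(d_{n+1}+d_{n-1})\quad(n\geq2).
\end{equation*}
So whenever $d_n\neq0$, the neighbours $d_{n\pm1}$ must on balance carry the opposite sign.

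Next I would use the general coefficients $g(m,n;m+n-2)$. These can be computed in closed form by tracking the second-highest terms through the recurrence: they are sums over the path positions of $c$-terms times products of $a$'s. Doing so for both sequences should give, after the same add/subtract trick, sharper sign information, namely $d_1\neq0$ unless $d\equiv0$, and a comparison between $c_{2n+1}$ and $c_1$. The non-Chebyshev assumption is used as follows. If $d_1=0$, the displayed bounds propagate, via an induction on $n$, to $d_n=0$ for all $n$, contradicting $(P_n(x))_{n\in\mathbb{N}_0}\neq(T_n(x))_{n\in\mathbb{N}_0}$.

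Given $d_1\neq0$, I would assume without loss of generality that $d_1>0$, since the other case follows by the symmetry $d\mapsto-d$. An induction would then show $d_{2n}<0$ and $d_{2n+1}>d_1$. For the odd indices, the first (unswitched) inequality, rearranged with the sign of $d_{2n}$ known, forces $d_{2n+1}-d_{2n-1}$ large enough to keep $d_{2n+1}>d_1$. The even steps use the product inequality $d_n(d_{n+1}+d_{n-1})\leq0$. Non-convergence is then immediate: $c_{2n+1}>c_1>1/2$ while $c_{2n}<1/2$, so the odd and even subsequences are separated by the positive gap $c_1-1/2$.

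The main obstacle is the strictness and the base cases, especially $n=1$, where the $m=2$ coefficient is trivially positive. Here the general-$m$ coefficients $g(m,n;|m-n|+2)$ with $m$ near $n$ are needed to rule out equalities such as $d_{2n}=0$. My first attempt would be to compute $g(n,n;2)$ explicitly for both sequences and check that their sum is a negative multiple of a quantity like $d_1\,d_2$.
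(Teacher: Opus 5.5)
\textbf{The genuine gap is the step $d_1\neq0$.} You claim that for $d_1=0$ the displayed bounds propagate to $d\equiv0$. This is false.

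With $d_1=0$, the two-sided bound at $n=2$ reads $|d_3|/2\le -d_2d_3$. Since $|d_2|<1/2$, this forces $d_3=0$. The bound at $n=3$ then gives $d_4=d_2$. Inductively you obtain only $d_{2k+1}=0$ and $d_{2k}=d_2$ for all $k$.

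Indeed, the family $c_{2k-1}=1/2$, $c_{2k}=1/2+\delta$ has $g(2,n;n)=\widetilde{g}(2,n;n)=0$ for all $n\geq2$. So no amount of $m=2$ information excludes it. Higher coefficients are indispensable exactly here, and your proposal only says that the general $g(m,n;m+n-2)$ ``should give'' $d_1\neq0$.

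The missing computation is short. Pass to monic polynomials, with $x\hat P_n=\hat P_{n+1}+\alpha_n^2\hat P_{n-1}$. Then $g(m,n;m+n-2)$ is a positive multiple of $\sum_{j=n}^{n+m-1}\alpha_j^2-\sum_{j=1}^{m-1}\alpha_j^2$. For $m=n=3$ and the family above, this quantity is $-\delta$, and it is $+\delta$ for the switched sequence. Explicitly, $g(3,3;4)=-\delta/a_2$ and $\widetilde{g}(3,3;4)=\delta/c_2$, which forces $\delta=0$.

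\textbf{The obstacle you single out is not one.} Take $d_1>0$.
\begin{enumerate}[(i)]
\item The unswitched $m=2$ inequality at $n=2k$ rearranges to $a_{2k}(c_{2k+1}-c_1)\geq c_{2k}(c_1+c_{2k-1}-1)>0$. This gives $c_{2k+1}>c_1$ inductively, with no sign information on $d_{2k}$ needed.
\item Your product inequality gives $d_{2k}\leq0$.
\item $d_{2k}=0$ is already excluded by the two-sided bound at $n=2k+1$, which would then read $|d_{2k+2}/2-d_1|\leq-d_{2k+1}d_{2k+2}$. This is impossible if $d_{2k+2}\geq0$. If $d_{2k+2}<0$, it requires $d_1+|d_{2k+2}|/2\leq d_{2k+1}|d_{2k+2}|<|d_{2k+2}|/2$, using $c_{2k+1}<1$, which is again impossible.
\end{enumerate}
So the case $d_1\neq0$ needs only $m=2$, and no separate base case at $n=1$ arises.

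Your proposed remedy $g(n,n;2)$ could not have helped anyway. Since $g(m,n;k)/h(k)=\int P_mP_nP_k\,\mathrm{d}\mu$ is symmetric in $m,n,k$, one has $g(n,n;2)=\frac{h(2)}{h(n)}\,g(2,n;n)$. More generally, the $|m-n|+2$ family is a positive rescaling of the $m+n-2$ family and carries no new sign information.

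With the $g(3,3;4)$ computation inserted, your plan becomes a complete proof. It follows the same linearization-coefficient analysis as the Lasser--Obermaier result, which the paper cites without reproducing a proof.
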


Based on results of \cite{KS25}, one can give an example $(P_n(x))_{n\in\mathbb{N}_0}\neq(T_n(x))_{n\in\mathbb{N}_0}$ such that $(P_n(x))_{n\in\mathbb{N}_0}$ satisfies nonnegative linearization of products and the preceding necessary criterion for $(\widetilde{P_n}(x))_{n\in\mathbb{N}_0}$ satisfying nonnegative linearization of products, too, is fulfilled: an explicit possibility is to set $c_{2n-1}=(6n+4)/(9n+9)$ and $c_{2n}=(n+1)/(3n+5)$, which can be seen from \cite[Section 3]{KS25} (cf. also Theorem~\ref{thm:kahlerszwarc1} below for $\alpha=2$ and $\beta=5$, and cf. also Theorem 3.1 in the arXiv version arXiv:2212.11229v3 [math.CA] of \cite{KS25}). However, for this example $(\widetilde{P_n}(x))_{n\in\mathbb{N}_0}$ does not satisfy nonnegative linearization of products: in fact, a short computation shows that $\widetilde{g}(3,3;4)=-128/135$. This paper is devoted to the construction of examples such that $(\widetilde{P_n}(x))_{n\in\mathbb{N}_0}$ satisfies nonnegative linearization of products, too. In fact, the authors first conjectured that such examples do not exist at all, which would indeed have led to a characterization of the Chebyshev polynomials of the first kind. As another result of the paper, we shall find appropriate substitutes and characterize the Chebyshev polynomials of the first kind by additionally considering properties of the duals and (alternatively) Haar measures.

\subsection{Additional motivation and outline of the paper}

The problem under consideration can also be motivated as follows: in \cite{KS25}, Kahler and Szwarc gave some explanations for the observation that even some ``unexpected'' examples of polynomial hypergroups, namely polynomial hypergroups induced by certain generalized Chebyshev polynomials \cite{Ka21a}, have the property $h(n)\geq2\;(n\in\mathbb{N})$ \cite[Section 2]{KS25} (in general, it is only clear that $h(n)>1\;(n\in\mathbb{N})$). However, they also provided the presumably first examples of polynomial hypergroups which violate this property \cite[Section 3]{KS25}. We recall two results from \cite[Section 3]{KS25}, cf. particularly \cite[Theorem 3.1]{KS25} and \cite[Theorem 3.2]{KS25}, in detail:

\begin{theorem}\label{thm:kahlerszwarc1}
Let $\alpha,\beta\geq2$. Then $(c_n)_{n\in\mathbb{N}}$ can be chosen in such a way that
\begin{equation*}
\alpha_n=\begin{cases} \frac{\sqrt{\beta}}{\sqrt{\alpha-1}+\sqrt{\beta-1}}, & n=1, \\ \frac{\sqrt{\alpha-1}}{\sqrt{\alpha-1}+\sqrt{\beta-1}}, & n\;\mbox{even}, \\ \frac{\sqrt{\beta-1}}{\sqrt{\alpha-1}+\sqrt{\beta-1}}, & \mbox{else}, \end{cases}
\end{equation*}
and the corresponding RWPS $(P_n(x))_{n\in\mathbb{N}_0}$ satisfies nonnegative linearization of products with an increasing\footnote{We use the expression ``increasing'' in the sense of ``nondecreasing'', i.e., we do not require strict monotonicity unless explicitly stated otherwise.} Haar function $h$ with $h(2)\geq2$. However, for every $\epsilon\in(0,1)$ and any choice of $\alpha$ the parameter $\beta$ can be chosen in such a way that $h(1)=1+\epsilon$.
\end{theorem}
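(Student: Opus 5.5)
The plan is to work with the orthonormal recurrence coefficients $\alpha_n$ rather than with $c_n$. Given a candidate sequence $(\alpha_n)_{n\in\mathbb{N}}$, the relations $\alpha_1=\sqrt{c_1}$ and $\alpha_n^2=c_na_{n-1}$ determine $(c_n)_{n\in\mathbb{N}}$ recursively by
\begin{equation*}
c_1=\alpha_1^2,\qquad c_n=\frac{\alpha_n^2}{1-c_{n-1}}\;(n\geq2).
\end{equation*}
The only issue is whether all $c_n$ lie in $(0,1)$. I would handle this through the orthonormal polynomials evaluated at $1$. Since $P_n(1)=1$ and $h(n)=p_n(1)^2$, we have $c_n=\alpha_n p_{n-1}(1)/p_n(1)$. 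Admissibility of $(c_n)_{n\in\mathbb{N}}$ is then equivalent to the sequence $q_n:=p_n(1)$, defined by $q_0=1$, $q_1=1/\alpha_1$ and $\alpha_{n+1}q_{n+1}=q_n-\alpha_nq_{n-1}$, satisfying $q_n>0$ and $\alpha_nq_{n-1}<q_n$ for all $n$.

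Write $s=\sqrt{\alpha-1}$, $t=\sqrt{\beta-1}$ and $S=s+t$. The key observation is that $\alpha_{2k}+\alpha_{2k+1}=(s+t)/S=1$ for $k\geq1$. Hence the recurrence for $(q_n)$ has $2$-periodic coefficients from $n=2$ onwards, and I would solve it explicitly via the two-step transfer matrix. Its characteristic roots turn out to be $s/t$-type and $t/s$-type expressions, as for the radial functions on the free product $\mathbb{Z}_\alpha\ast\mathbb{Z}_\beta$, which is where this choice of $\alpha_n$ comes from. The initial value $\alpha_1=\sqrt{\beta}/S$ should be exactly the one that kills the decaying mode, or at least keeps it positive. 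From the explicit formula I would read off three things:
\begin{itemize}
\item positivity of $q_n$;
\item the inequality $q_{n+1}>0$, which is equivalent to $c_n<1$, because $q_n-\alpha_nq_{n-1}=\alpha_{n+1}q_{n+1}$;
\item monotonicity of $q_n$, and hence of $h$.
\end{itemize}
In addition, $h(2)=q_2^2=(q_1-\alpha_1)^2/\alpha_2^2$ gives $h(2)\geq2$ by a direct computation using $\alpha\geq2$.

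For nonnegative linearization I would invoke the sufficient criterion of M\l{}otkowski and Szwarc \cite{MS01} for orthonormal polynomials with symmetric recurrence. That criterion needs monotonicity-type conditions on $(\alpha_n)$ along even and odd indices, together with an inequality comparing $\alpha_{n+1}^2-\alpha_n^2$ with earlier terms. Since $(\alpha_n)_{n\geq2}$ is $2$-periodic, these conditions reduce to finitely many inequalities in $s,t$. These inequalities should hold precisely because $\alpha,\beta\geq2$ (i.e.\ $s,t\geq1$) and because $\alpha_1$ is at least the corresponding odd value, since $\sqrt{\beta}\geq\sqrt{\beta-1}$. I expect this step to be the main obstacle. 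The criterion in \cite{MS01} is stated for sequences with some monotone structure, whereas here $\alpha_{2k}$ and $\alpha_{2k+1}$ alternate with no fixed order. So the first thing I would try is the version of the criterion which allows two interlaced monotone subsequences. If that fails, I would fall back on a direct induction on the linearization coefficients via the recurrence $g(m,n+1;k)\,\alpha_{n+1}=\alpha_{k+1}g(m,n;k+1)+\alpha_kg(m,n;k-1)-\alpha_ng(m,n-1;k)$ (in orthonormal normalization), exploiting the periodicity.

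Finally, the Haar value at $1$ is
\begin{equation*}
h(1)=\frac{1}{c_1}=\frac{(s+t)^2}{\beta}=\frac{\left(\sqrt{\alpha-1}+\sqrt{\beta-1}\right)^2}{\beta}.
\end{equation*}
For fixed $\alpha$ this is continuous in $\beta\in[2,\infty)$. At $\beta=2$ it equals $(\sqrt{\alpha-1}+1)^2/2\geq2$, and as $\beta\to\infty$ it tends to $1$, since it behaves like $1+2\sqrt{\alpha-1}/\sqrt{\beta}+O(1/\beta)$. By the intermediate value theorem every value $1+\epsilon$ with $\epsilon\in(0,1)$ is attained for a suitable $\beta$, which completes the plan.
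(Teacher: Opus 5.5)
\textbf{Main gap: nonnegative linearization is never proved.} This paper does not prove the statement itself; it quotes it from \cite[Theorem 3.1]{KS25}, so your proposal has to stand on its own. You appeal to an unspecified ``version'' of the M\l{}otkowski--Szwarc criterion, but you neither state its hypotheses nor verify them. The fallback induction on $g(m,n;k)$ is only named, not carried out.

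The candidates in sight actually fail for this sequence. Write $u:=s/S$ and $v:=t/S$, so that $\alpha_{2k}=u$, $\alpha_{2k+1}=v$ for $k\geq1$, and $u+v=1$.
\begin{itemize}
\item The criterion of \cite{MS01} used in this paper (Theorem~\ref{thm:ms01}) requires $\alpha_{2N}I+J_{2N}$, resp.\ $\alpha_{2N+1}I+J_{2N+1}$, to be positive definite, where $J_M$ is the $M\times M$ truncated Jacobi matrix. Equivalently, all zeros of $p_{2N}$, resp.\ $p_{2N+1}$, must exceed $-u$, resp.\ $-v$.
\item The essential spectrum of the Jacobi operator is $[-1,-|u-v|]\cup[|u-v|,1]$. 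Hence $-1\in\mathrm{supp}\;\mu$, and the smallest zeros of $p_M$ tend to $-1<-\max\{u,v\}$. So positive definiteness fails for all large $N$.
\item Criteria built on monotone recurrence data along even and odd indices are blocked too. The odd subsequence starts with a strict decrease, $\alpha_1=\sqrt{\beta}/S>\sqrt{\beta-1}/S=\alpha_3$; the inequality $\sqrt{\beta}\geq\sqrt{\beta-1}$ you cite works against such hypotheses. For $\alpha\neq\beta$ the values $u\neq v$ also alternate forever.
\end{itemize}
So there is no visible reduction to ``finitely many inequalities in $s,t$''. The nonnegativity of the linearization coefficients, which is the real content of \cite[Theorem 3.1]{KS25}, remains unaddressed.

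\textbf{Secondary error: the analysis of $q_n=p_n(1)$ is wrong, but repairable.} Since $1=u+v$ is the edge of the essential spectrum, the two-step transfer matrix at $x=1$ has the double eigenvalue $1$. Indeed, the ansatz $q_{2k}=A\lambda^k$, $q_{2k+1}=B\lambda^k$ gives $B=(u\lambda+v)A$ and $uv(\lambda-1)^2=0$. There is no decaying mode and no $s/t$-type root; the solutions are one constant and one linearly growing solution.

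Explicitly, for $n\geq1$,
\[
q_n=\frac{\sqrt{\beta}}{t}+D L_n,\qquad L_{2k}=k,\quad L_{2k+1}=k+u,\quad D=\frac{(st-1)S}{st\sqrt{\beta}}.
\]
Positivity and monotonicity of $(q_n)$, together with $q_1=S/\sqrt{\beta}>1=q_0$, therefore come from $D\geq0$, i.e.\ $st\geq1$. This is where $\alpha,\beta\geq2$ enters the admissibility of $(c_n)$, and it shows $h(n)=q_n^2$ grows quadratically.

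With this correction, the rest of your plan goes through:
\begin{itemize}
\item the admissibility argument and the monotonicity of $h$;
\item the bound $h(2)=(s^2+2st-1)^2/(s^2\beta)\geq2$, which uses both $s\geq1$ and $t\geq1$, not only $\alpha\geq2$;
\item the intermediate value argument for $h(1)=(s+t)^2/\beta$.
\end{itemize}
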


\begin{theorem}\label{thm:kahlerszwarc2}
Let $(s_n)_{n\in\mathbb{N}}\subseteq(0,1)$ be a convex null sequence. Then $(c_n)_{n\in\mathbb{N}}$ can be chosen in such a way that
\begin{equation*}
\alpha_n=\begin{cases} 1-s_{\frac{n+1}{2}}, & n\;\mbox{odd}, \\ s_{\frac{n+2}{2}}-s_{\frac{n+4}{2}}, & n\;\mbox{even}, \end{cases}
\end{equation*}
and the corresponding RWPS $(P_n(x))_{n\in\mathbb{N}_0}$ satisfies nonnegative linearization of products with a strictly increasing Haar function $h$ with $h(2)>4$. However, for every $\epsilon\in(0,1)$ the sequence $(s_n)_{n\in\mathbb{N}}$ can be chosen in such a way that $h(1)=1+\epsilon$.
\end{theorem}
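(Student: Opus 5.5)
The plan is to translate the prescribed orthonormal recurrence coefficients $\alpha_n$ back into the random walk coefficients $c_n$. Then I check three things in turn: that the $c_n$ are admissible, that nonnegative linearization holds, and the claimed Haar function properties. Since $\alpha_1=\sqrt{c_1}$ and $\alpha_n^2=c_na_{n-1}=c_n(1-c_{n-1})$ for $n\geq2$, the sequence $(c_n)_{n\in\mathbb{N}}$ is uniquely determined recursively by
\begin{equation*}
c_1=(1-s_1)^2,\qquad c_n=\frac{\alpha_n^2}{1-c_{n-1}}\;(n\geq2).
\end{equation*}
The only thing to verify is that this recursion never leaves $(0,1)$. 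I would do this by induction, with an invariant adapted to the parity pattern: the odd $c_{2k-1}$ are close to $1$ and the even $c_{2k}$ are close to $0$.

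Concretely, I would aim to prove inductively that
\begin{equation*}
a_{2k}=1-c_{2k}\geq 1-s_{k+1}
\end{equation*}
together with a matching upper bound for $c_{2k-1}$. Given this, $c_{2k+1}=(1-s_{k+1})^2/a_{2k}\leq 1-s_{k+1}<1$. Conversely, $c_{2k}=(s_{k+1}-s_{k+2})^2/(1-c_{2k-1})$ must be shown small, using the upper bound on $c_{2k-1}$. The convexity inequality $s_{k+1}-s_{k+2}\leq s_k-s_{k+1}$ and $s_k\in(0,1)$ should be exactly what closes the induction. Finding the right invariant, possibly a closed formula for $c_n$ in terms of the $s_k$, is the main obstacle. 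I would first try to guess such a closed form from computing $c_1,\dots,c_4$ symbolically.

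For nonnegative linearization I would invoke the criterion of M\l{}otkowski and Szwarc \cite{MS01} for symmetric orthonormal recurrences, in its version for sequences treated separately along odd and even indices. The odd subsequence $\alpha_{2k-1}=1-s_k$ is increasing, because $(s_n)_{n\in\mathbb{N}}$ is decreasing as a convex null sequence. The even subsequence $\alpha_{2k}=s_{k+1}-s_{k+2}$ is decreasing, by convexity. I also expect the criterion to need an interlacing-type inequality such as $\alpha_{2k}\leq\alpha_{2k\pm1}$. This is immediate here, since $s_{k+1}-s_{k+2}<s_{k+1}<1-s_{k+1}$ whenever $s_{k+1}<1/2$. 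If some $s_k\geq1/2$ occurs, it needs a slightly more careful comparison using convexity.

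Finally I would treat the Haar function. By \eqref{eq:hdef}, $h(n)/h(n-1)=a_{n-1}/c_n$, so $h$ is strictly increasing iff $a_{n-1}>c_n$, i.e.\ $c_{n-1}+c_n<1$. This should follow from the bounds obtained in the induction, noting that an odd-indexed $c$ close to $1$ is always paired with an even-indexed $c$ close to $0$. Next, $h(1)=1/c_1=(1-s_1)^{-2}$, and $h(2)=a_1/(c_1c_2)=a_1^2/(c_1\alpha_2^2)=(1-(1-s_1)^2)^2/((1-s_1)^2(s_2-s_3)^2)$. Convexity gives $s_2-s_3\leq (s_1-s_3)/2< s_1/2$, and together with $1-(1-s_1)^2\geq s_1$ this yields $h(2)>4$. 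Lastly, $h(1)=1+\epsilon$ is achieved by choosing $s_1=1-(1+\epsilon)^{-1/2}$ and completing it to any convex null sequence, e.g.\ $s_n=s_1 2^{1-n}$.
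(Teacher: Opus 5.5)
The paper does not prove this theorem; it only quotes it from \cite[Theorem 3.2]{KS25}. Your argument therefore has to stand on its own, and most of it does. Your invariant closes using only that $(s_n)$ is positive and strictly decreasing, so convexity is not needed there. From $c_{2k-1}\leq1-s_k$ (true for $k=1$, since $c_1=(1-s_1)^2$) you get $a_{2k-1}\geq s_k$, hence $c_{2k}=(s_{k+1}-s_{k+2})^2/a_{2k-1}<s_{k+1}^2/s_k<s_{k+1}$, and then $c_{2k+1}=(1-s_{k+1})^2/a_{2k}<1-s_{k+1}$. Strict monotonicity of $h$ follows at once. Indeed, $a_{2k}>c_{2k+1}$ is equivalent to $c_{2k}<s_{k+1}$, and $a_{2k-1}>c_{2k}$ is equivalent to $a_{2k-1}>\alpha_{2k}$, which holds because $\alpha_{2k}<s_k\leq a_{2k-1}$. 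Your computations of $h(2)>4$ and of $h(1)=1+\epsilon$ are correct.

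The genuine gap is the nonnegative linearization step. You do not state the criterion you invoke. The hypotheses you anticipate (odd subsequence increasing, even subsequence decreasing, $\alpha_{2k}\leq\alpha_{2k\pm1}$) do not imply nonnegative linearization.

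For a counterexample, take $\alpha_{2k-1}=1/2$ for all $k$, $\alpha_2=1/4$, and $\alpha_{2k}=1/20$ for $k\geq2$. Since $\alpha_n+\alpha_{n+1}\leq3/4<1$, the Jacobi operator has norm $<1$, so $p_n(1)>0$ and $p_n/p_n(1)$ is an RWPS with the same linearization signs. Write $p_n=\kappa_n(x^n-\sigma_nx^{n-2}+\ldots)$ with $\sigma_n=\alpha_1^2+\cdots+\alpha_{n-1}^2$. Then the coefficient of $p_{m+n-2}$ in $p_mp_n$ has the sign of $\sigma_{m+n}-\sigma_m-\sigma_n$. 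For $(m,n)=(3,4)$ this equals $\alpha_4^2+\alpha_5^2+\alpha_6^2-\alpha_1^2-\alpha_2^2=1/200-1/16<0$. A small perturbation makes all your monotonicity assumptions strict without changing this sign.

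What is missing is a quantitative link between the two subsequences: the increments of the odd terms must dominate the even terms, $\alpha_{2k-1}+\alpha_{2k}\leq\alpha_{2k+1}$. Here this is exactly $s_{k+1}-s_{k+2}\leq s_k-s_{k+1}$, i.e.\ convexity, and this is where convexity is really needed.

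With this inequality, Theorem~\ref{thm:ms01}~(ii) closes the argument. In the $(2N+1)\times(2N+1)$ matrix there, the diagonal entry is $\alpha_{2N+1}=1-s_{N+1}$, and the off-diagonal row sums are:
\begin{itemize}
\item $1-s_1$ in the first row;
\item $1-s_k+s_{k+1}-s_{k+2}\leq1-s_{k+1}$ in rows $2k$, $1\leq k\leq N$;
\item $1-s_{k+2}$ in rows $2k+1$, $1\leq k\leq N-1$;
\item $s_{N+1}-s_{N+2}\leq s_N-s_{N+1}<1-s_{N+1}$ in the last row.
\end{itemize}
All of these are at most $1-s_{N+1}$, with strict inequality in the first row. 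Hence the matrix is irreducible and weakly diagonally dominant with one strictly dominant row, so it is positive definite for every $N$.
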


The relation to the problem we study in this paper is as follows: if both $(P_n(x))_{n\in\mathbb{N}_0}$ and $(\widetilde{P_n}(x))_{n\in\mathbb{N}_0}$ satisfy nonnegative linearization of products and $(P_n(x))_{n\in\mathbb{N}_0}\neq(T_n(x))_{n\in\mathbb{N}_0}$, then, by \eqref{eq:hdef}, one has $h(1)=1/c_1$ and $\widetilde{h}(1)=1/(1-c_1)$. Since $c_1\neq1/2$ by Theorem~\ref{thm:lasserobermaier}, one has either $h(1)<2$ or $\widetilde{h}(1)<2$. In particular, any solution to the problem under consideration also provides examples such that the first Haar weight deceeds $2$. In other words: the existence of RWPS $(P_n(x))_{n\in\mathbb{N}_0}\neq(T_n(x))_{n\in\mathbb{N}_0}$ such that both $(P_n(x))_{n\in\mathbb{N}_0}$ and $(\widetilde{P_n}(x))_{n\in\mathbb{N}_0}$ satisfy nonnegative linearization of products is a sharpening of the result that RWPS with $h(1)<2$ exist.\\

Besides this motivation from the theory of orthogonal polynomials and special functions (and its intersections with functional and harmonic analysis), we want to mention two further motivations. The first one is of combinatorial type: nonnegative linearization of products can be characterized via certain discrete boundary value problems and the corresponding existence of matrices with specific properties \cite{Sz03,Sz05}, and the switching of $(a_n)_{n\in\mathbb{N}}$ and $(c_n)_{n\in\mathbb{N}}$ can be translated into such considerations. The second one is of probabilistic type: the recurrence coefficients $(a_n)_{n\in\mathbb{N}}$ and $(c_n)_{n\in\mathbb{N}}$ can be interpreted as $1$-step transition probabilities of certain corresponding random walks \cite{CSvD98,KM59,vDS93}, and the switching of $(a_n)_{n\in\mathbb{N}}$ and $(c_n)_{n\in\mathbb{N}}$ obviously just corresponds to opposite probabilities. If both $(P_n(x))_{n\in\mathbb{N}_0}$ and $(\widetilde{P_n}(x))_{n\in\mathbb{N}_0}$ satisfy nonnegative linearization of products, however, then the resulting tools of harmonic analysis are available for both versions.\\

The paper is organized as follows: in Section~\ref{sec:main}, we present our main results and particularly construct RWPS $(P_n(x))_{n\in\mathbb{N}_0}\neq(T_n(x))_{n\in\mathbb{N}_0}$ such that both $(P_n(x))_{n\in\mathbb{N}_0}$ and $(\widetilde{P_n}(x))_{n\in\mathbb{N}_0}$ satisfy nonnegative linearization of products. More precisely, we provide a sufficient criterion for the recurrence coefficients $(c_n)_{n\in\mathbb{N}}$ and give explicit examples which fulfill this criterion, and we also present the above-mentioned characterizations of $(T_n(x))_{n\in\mathbb{N}_0}$. To our knowledge, our criterion also yields the first examples of polynomial hypergroups whose Haar function is not increasing\footnote{In fact, we shall see that RWPS $(P_n(x))_{n\in\mathbb{N}_0}\neq(T_n(x))_{n\in\mathbb{N}_0}$ for which both $(P_n(x))_{n\in\mathbb{N}_0}$ and $(\widetilde{P_n}(x))_{n\in\mathbb{N}_0}$ satisfy nonnegative linearization of products can never be accompanied by two increasing Haar functions $h$ and $\widetilde{h}$, which can be seen as another motivation for the problem under consideration.}, as well as the first examples whose second Haar weights deceed $2$. This highlights the diversity of orthogonal polynomials and particularly answers some open problems mentioned in \cite{KS25}. In Section~\ref{sec:proofs}, we give the proofs. Finally, in Section~\ref{sec:further} we collect some more properties of the polynomials provided by our sufficient criterion. In particular, we are interested in properties of the corresponding orthogonalization measures.\\

We remark that our research benefited from computer algebra systems (Maple) to find suitable simplifications and estimations, get conjectures, do numerical experiments and so on. However, the final proofs can be understood without any computer usage.

\section{Statement of the main results}\label{sec:main}

The following result provides the above-mentioned sufficient criterion whose application will solve the problem under consideration.

\begin{theorem}\label{thm:sufficientcriterion}
Let $(s_n)_{n\in\mathbb{N}}\subseteq(0,1)$ be a sequence such that
\begin{equation}\label{eq:suf1}
s_2<1-\frac{s_1}{1-s_1}
\end{equation}
and
\begin{equation}\label{eq:suf2}
s_n\leq\frac{s_{n-2}}{2}-2s_{n-1}\;(n\geq3),
\end{equation}
and let $(c_n)_{n\in\mathbb{N}}$ be given by either
\begin{equation}\label{eq:cnfirst}
c_n:=\begin{cases} 1-s_n, & n\;\mbox{odd}, \\ s_n, & n\;\mbox{even} \end{cases}
\end{equation}
or
\begin{equation}\label{eq:cnsecond}
c_n:=\begin{cases} s_n, & n\;\mbox{odd}, \\ 1-s_n, & n\;\mbox{even}. \end{cases}
\end{equation}
Then both $(P_n(x))_{n\in\mathbb{N}_0}$ and $(\widetilde{P_n}(x))_{n\in\mathbb{N}_0}$ satisfy nonnegative linearization of products.
\end{theorem}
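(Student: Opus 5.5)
The first step is a symmetry reduction. Since $a_n=1-c_n$, the switched sequence $(\widetilde{P_n}(x))_{n\in\mathbb{N}_0}$ has recurrence coefficients $\widetilde{c_n}=a_n=1-c_n$ and $\widetilde{a_n}=c_n$. If $(c_n)_{n\in\mathbb{N}}$ is given by \eqref{eq:cnfirst}, then $(\widetilde{c_n})_{n\in\mathbb{N}}$ is exactly the sequence given by \eqref{eq:cnsecond} with the same $(s_n)_{n\in\mathbb{N}}$, and conversely. So it suffices to prove one statement: for every $(s_n)_{n\in\mathbb{N}}$ satisfying \eqref{eq:suf1} and \eqref{eq:suf2}, both the RWPS defined by \eqref{eq:cnfirst} and the one defined by \eqref{eq:cnsecond} satisfy nonnegative linearization of products. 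I would treat the two choices as two parallel cases.

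Next, I would extract elementary consequences of the hypotheses. From \eqref{eq:suf2} and $s_{n-1}>0$ we get $s_n<s_{n-2}/2$, and also $s_n+2s_{n-1}\le s_{n-2}/2$. Hence the subsequences $(s_{2n})_{n\in\mathbb{N}}$ and $(s_{2n+1})_{n\in\mathbb{N}_0}$ decrease at least geometrically. Moreover, each $s_n$ with $n\ge3$ is small compared with its two predecessors; in particular $s_3\le s_1/2$ and $s_4\le s_2/2$.

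I would then translate everything into the orthonormal coefficients $\alpha_n=\sqrt{c_na_{n-1}}$. For \eqref{eq:cnfirst} these are
\begin{equation*}
\alpha_1=\sqrt{1-s_1},\qquad \alpha_n^2=(1-s_n)(1-s_{n-1})\;(n\ge3\mbox{ odd}),\qquad \alpha_n^2=s_ns_{n-1}\;(n\mbox{ even}).
\end{equation*}
For \eqref{eq:cnsecond} one gets the analogous formulas with the roles of odd and even indices interchanged. So in both cases the recurrence alternates between ``large'' coefficients close to $1$ and ``small'' coefficients that tend to $0$ rapidly. This matches the pattern of Theorem~\ref{thm:kahlerszwarc2}, whose proof in \cite{KS25} relies on the criterion of M\l{}otkowski and Szwarc \cite{MS01}. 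I would apply that same criterion here.

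That criterion gives sufficient conditions for nonnegative linearization in terms of the recurrence coefficients, which for alternating coefficients take roughly the following form. The even-indexed and odd-indexed subsequences of $(\alpha_n)_{n\in\mathbb{N}}$ (equivalently of $(c_n)_{n\in\mathbb{N}}$ and $(a_n)_{n\in\mathbb{N}}$) must be monotone in the appropriate direction. In addition, there is a ``majorization'' inequality linking consecutive coefficients, of the type $\alpha_{n+1}^2-\alpha_n^2\ge\alpha_{n}^2-\alpha_{n-1}^2$ or an alternating analogue of it. I would write out these inequalities explicitly in terms of $s_{n-2},s_{n-1},s_n$ and then verify them from \eqref{eq:suf2}. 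Once products of small terms are discarded, such inequalities should reduce to $2s_{n-1}+s_n\le s_{n-2}/2$, which is precisely why the constants $1/2$ and $2$ appear in \eqref{eq:suf2}.

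The initial indices do not fit the general pattern, because $\alpha_1=\sqrt{c_1}$ instead of $\sqrt{c_1a_0}$. The corresponding boundary inequality in the criterion should involve only $s_1$ and $s_2$, and I expect it to be exactly \eqref{eq:suf1}, i.e. $s_2(1-s_1)<1-2s_1$, in one of the two cases, and weaker in the other.

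I expect the main obstacle to be this matching step. One has to choose the right version of the M\l{}otkowski--Szwarc conditions, possibly after a rescaling or after passing to a monic normalization, so that the same hypotheses \eqref{eq:suf1}--\eqref{eq:suf2} cover both parities simultaneously. The cross terms such as $s_ns_{n-1}$ must then be absorbed carefully rather than simply dropped. If the direct check fails for small $n$, my first remedy would be to handle $n=3,4$ separately, using the sharper bounds $s_3\le s_1/2-2s_2$ and $s_4\le s_2/2-2s_3$, together with \eqref{eq:suf1}.
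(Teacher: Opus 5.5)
Your symmetry reduction and your formulas for $\alpha_n$ are correct. The gap is that you never pin down a concrete sufficient condition, and the row-by-row inequalities you expect cannot work for these coefficients.

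The M\l{}otkowski--Szwarc criterion relevant here (Theorem~\ref{thm:ms01}) requires positive definiteness, for every $N$, of a tridiagonal matrix with constant diagonal. For \eqref{eq:cnfirst} and $(P_n(x))_{n\in\mathbb{N}_0}$ it must be the one with diagonal $\alpha_{2N+1}$ and off-diagonal entries $\alpha_1,\ldots,\alpha_{2N}$; the other one already fails at its $2\times2$ minor, since $\alpha_{2N}<\alpha_1$. The inequalities one can check locally are diagonal-dominance conditions, and these fail. From \eqref{eq:suf2} you get $s_{m+1}<s_m/4$, hence $\alpha_{2n}=\sqrt{s_{2n}s_{2n-1}}>2s_{2n}$. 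On the other hand, $\alpha_{2N+1}-\alpha_{2n+1}<1-\alpha_{2n+1}\leq s_{2n}+s_{2n+1}<\tfrac{5}{4}s_{2n}$. So row $2n+1$ violates dominance for every $1\leq n\leq N-1$ and every admissible $(s_n)_{n\in\mathbb{N}}$. For $s_n=5^{-n}$ and $N=2$ one gets $\alpha_2+\alpha_3\approx1.065>\alpha_5\approx0.999$, although the matrix is positive definite.

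The cross terms $\sqrt{s_ns_{n-1}}$ are therefore larger than the margins they must be absorbed into, and they cannot be discarded. Also, an inequality like $\alpha_{n+1}^2-\alpha_n^2\geq\alpha_n^2-\alpha_{n-1}^2$ is violated at every other $n$ for alternating coefficients. Your reading of \eqref{eq:suf2} as exactly what the local conditions reduce to is a guess, not a derivation.

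What is missing is a global way to certify positive definiteness. The paper uses the pivot criterion of Theorem~\ref{thm:el03}: $t_1=\alpha_{2N+1}$ and $t_{k+1}=\alpha_{2N+1}-\alpha_k^2/t_k>0$. It separates the odd pivots from the even ones, giving a two-step recursion for $t_1,t_3,\ldots,t_{2N+1}$. It then proves by induction the strengthened bound $t_{2n+1}>\alpha_{2N+1}^3a_{2n}$. A bare positivity hypothesis is too weak to carry the induction: $t_{2n+1}>0$ does not give $t_{2n+2}=\alpha_{2N+1}-\alpha_{2n+1}^2/t_{2n+1}>0$.

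The inductive step reduces to $2c_{2n+2}+c_{2n+1}<c_{2N+1}^2a_{2N}^2$. This is where \eqref{eq:suf2} genuinely enters, via $s_{2N-1}\geq4s_{2N}+2s_{2N+1}$ and the monotonicity of $2c_{2k+2}+c_{2k+1}$. The base cases need \eqref{eq:suf1} and the derived inequalities \eqref{eq:sufadd1} and \eqref{eq:sufadd2}.

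The switched polynomials are handled the same way, with diagonal $\widetilde{\alpha_{2N}}$ and the roles of odd and even pivots interchanged. There your guess about \eqref{eq:suf1} is right: it is exactly $\widetilde{\alpha_2}>\widetilde{\alpha_1}$, i.e. the case $N=1$. A positive weight vector giving weighted diagonal dominance (your ``rescaling'') would be an equivalent certificate. Constructing it is precisely the hard part your proposal leaves open.
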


Some properties of the sequence $(s_n)_{n\in\mathbb{N}}$ above, particularly concerning growth and monotonicity, will be considered in Lemma~\ref{lma:sufficientcriterion}. Based on Theorem~\ref{thm:sufficientcriterion}, we immediately obtain the existence of RWPS $(P_n(x))_{n\in\mathbb{N}_0}\neq(T_n(x))_{n\in\mathbb{N}_0}$ such that both $(P_n(x))_{n\in\mathbb{N}_0}$ and $(\widetilde{P_n}(x))_{n\in\mathbb{N}_0}$ satisfy nonnegative linearization of products. For instance, the choices $s_n=1/5^n$ or $s_n=1/(n+3)!$ satisfy the conditions of Theorem~\ref{thm:sufficientcriterion}. However, we can show more and find a variety of such RWPS. Furthermore, we classify the RWPS $(P_n(x))_{n\in\mathbb{N}_0}$ such that both $(P_n(x))_{n\in\mathbb{N}_0}$ and $(\widetilde{P_n}(x))_{n\in\mathbb{N}_0}$ satisfy nonnegative linearization of products by properties of their duals. In particular, we can characterize $(T_n(x))_{n\in\mathbb{N}_0}$ via an additional property.

\begin{theorem}\label{thm:actualresult}
\begin{enumerate}[(i)]
\item For every $C\in(0,1)\backslash\{1/2\}$, there exists an RWPS $(P_n(x))_{n\in\mathbb{N}_0}\neq(T_n(x))_{n\in\mathbb{N}_0}$ with $c_1=C$ such that both $(P_n(x))_{n\in\mathbb{N}_0}$ and $(\widetilde{P_n}(x))_{n\in\mathbb{N}_0}$ satisfy nonnegative linearization of products.
\item $(P_n(x))_{n\in\mathbb{N}_0}$ and $(\widetilde{P_n}(x))_{n\in\mathbb{N}_0}$ satisfy nonnegative linearization of products and $0\in\widehat{\mathbb{N}_0}\cap\widetilde{\widehat{\mathbb{N}_0}}$ if and only if $(P_n(x))_{n\in\mathbb{N}_0}=(T_n(x))_{n\in\mathbb{N}_0}$.
\item If $(P_n(x))_{n\in\mathbb{N}_0}$ and $(\widetilde{P_n}(x))_{n\in\mathbb{N}_0}$ satisfy nonnegative linearization of products, then $0\in\widehat{\mathbb{N}_0}$ (if $c_1\leq1/2$) or $0\in\widetilde{\widehat{\mathbb{N}_0}}$ (if $c_1\geq1/2$).
\end{enumerate}
\end{theorem}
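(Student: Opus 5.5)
Parts (ii) and (iii) will be proved by evaluating at $x=0$ and invoking Theorem~\ref{thm:lasserobermaier}. Part (i) will follow from Theorem~\ref{thm:sufficientcriterion} with an explicit choice of $(s_n)_{n\in\mathbb{N}}$.

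\emph{Values at $0$.} By symmetry, $P_{2n+1}(0)=0$. Evaluating \eqref{eq:threetermrec} at $x=0$ with index $2n-1$ gives $a_{2n-1}P_{2n}(0)=-c_{2n-1}P_{2n-2}(0)$. Hence
\begin{equation*}
|P_{2n}(0)|=\prod_{k=1}^n\frac{c_{2k-1}}{a_{2k-1}}.
\end{equation*}
In the same way, \eqref{eq:threetermrecswitched} gives
\begin{equation*}
|\widetilde{P_{2n}}(0)|=\prod_{k=1}^n\frac{a_{2k-1}}{c_{2k-1}}.
\end{equation*}
So $0\in\widehat{\mathbb{N}_0}$ if and only if the first product stays bounded in $n$, and $0\in\widetilde{\widehat{\mathbb{N}_0}}$ if and only if the second one does.

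\emph{Proof of (iii).} If $(P_n(x))_{n\in\mathbb{N}_0}=(T_n(x))_{n\in\mathbb{N}_0}$, then all factors equal $1$ and there is nothing to show. Otherwise Theorem~\ref{thm:lasserobermaier} applies. If $c_1<1/2$, then $c_{2k-1}\le c_1<1/2$ for all $k$, so every factor $c_{2k-1}/a_{2k-1}$ is smaller than $1$. The first product is therefore bounded by $1$, and $0\in\widehat{\mathbb{N}_0}$. If $c_1>1/2$, then $c_{2k-1}\ge c_1>1/2$ for all $k$, so every factor $a_{2k-1}/c_{2k-1}$ is smaller than $1$, and $0\in\widetilde{\widehat{\mathbb{N}_0}}$.

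\emph{Proof of (ii).} For ``$\Leftarrow$'', $T_n(0)\in\{0,\pm1\}$ gives $0\in\widehat{\mathbb{N}_0}$. Since $\widetilde{T_n}=T_n$, also $0\in\widetilde{\widehat{\mathbb{N}_0}}$. For ``$\Rightarrow$'', suppose $(P_n(x))_{n\in\mathbb{N}_0}\ne(T_n(x))_{n\in\mathbb{N}_0}$. By Theorem~\ref{thm:lasserobermaier}, $c_1\neq1/2$ and $c_{2k-1}$ lies on the same side of $1/2$ as $c_1$, at least as far from $1/2$.
\begin{itemize}
\item If $c_1>1/2$, each factor $c_{2k-1}/a_{2k-1}$ is at least $c_1/a_1>1$. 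Hence $|P_{2n}(0)|\ge (c_1/a_1)^n\to\infty$, so $0\notin\widehat{\mathbb{N}_0}$.
\item If $c_1<1/2$, the same argument for the reciprocal factors gives $0\notin\widetilde{\widehat{\mathbb{N}_0}}$.
\end{itemize}
In either case $0\notin\widehat{\mathbb{N}_0}\cap\widetilde{\widehat{\mathbb{N}_0}}$, which is a contradiction.

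\emph{Proof of (i).} This is the only step requiring an actual construction. Given $C>1/2$, I use \eqref{eq:cnfirst} with $s_1:=1-C\in(0,1/2)$. Given $C<1/2$, I use \eqref{eq:cnsecond} with $s_1:=C\in(0,1/2)$. In both cases $c_1=C$.

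Condition \eqref{eq:suf1} requires $s_2<1-s_1/(1-s_1)$, and the right-hand side is positive because $s_1<1/2$. So I pick
\begin{equation*}
0<s_2<\min\Bigl\{1-\frac{s_1}{1-s_1},\,\frac{s_1}{5}\Bigr\},
\end{equation*}
and then set $s_n:=s_2r^{n-2}$ for $n\ge3$, with $r:=1/5$.

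It remains to check \eqref{eq:suf2}.
\begin{itemize}
\item For $n=3$ it reads $s_2/5\le s_1/2-2s_2$, i.e.\ $s_2\le 5s_1/22$. This holds because $s_2<s_1/5$.
\item For $n\ge4$ it reduces, after dividing by $s_2r^{n-4}$, to $r^2\le 1/2-2r$. With $r=1/5$ this is $1/25\le1/10$, which is true.
\end{itemize}
All $s_n$ lie in $(0,1)$. Theorem~\ref{thm:sufficientcriterion} then gives nonnegative linearization of products for both $(P_n(x))_{n\in\mathbb{N}_0}$ and $(\widetilde{P_n}(x))_{n\in\mathbb{N}_0}$. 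Finally, $c_1=C\ne1/2$, so $(P_n(x))_{n\in\mathbb{N}_0}\ne(T_n(x))_{n\in\mathbb{N}_0}$.

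The only place needing care is making the constraint on $s_2$ compatible with \eqref{eq:suf2} at $n=3$. Taking $s_2$ small relative to $s_1$ resolves this, so I expect no real obstacle.
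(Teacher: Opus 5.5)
Your proof is correct. Parts (i) and (iii) essentially coincide with the paper's. Part (iii) is the paper's own argument: the product formulas for $|P_{2n}(0)|$ and $|\widetilde{P_{2n}}(0)|$ combined with Theorem~\ref{thm:lasserobermaier}. In part (i), your sequence (a small $s_2$, then ratio $1/5$) is an equally valid variant of the paper's $s_n=C'/K^{n-1}$ with $C'=\min\{C,1-C\}$ and $K$ large, and your checks of \eqref{eq:suf1} and \eqref{eq:suf2} are fine.

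The genuine difference is the ``$\Rightarrow$'' direction of (ii). The paper cites \cite[Proposition 3.1]{KS25}, which gives $c_1\le 1/2$ whenever $0\in\widehat{\mathbb{N}_0}$. Applying it to both $(P_n(x))_{n\in\mathbb{N}_0}$ and $(\widetilde{P_n}(x))_{n\in\mathbb{N}_0}$ pins $c_1=1/2$. After that, only the consequence ``$(P_n(x))_{n\in\mathbb{N}_0}\neq(T_n(x))_{n\in\mathbb{N}_0}\Rightarrow c_1\neq 1/2$'' of Theorem~\ref{thm:lasserobermaier} is needed. You instead use the full monotonicity statement $c_{2n+1}>c_1>1/2$ (resp.\ $c_{2n+1}<c_1<1/2$) together with the explicit values at $0$. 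This gives $|P_{2n}(0)|\ge (c_1/a_1)^n$ or $|\widetilde{P_{2n}}(0)|\ge (a_1/c_1)^n$, so one of the two duals misses $0$. Your route stays inside the paper and makes (ii) and (iii) two halves of one computation. The paper's route is shorter but rests on an external result.

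One small omission: for ``$\Leftarrow$'' in (ii), say that $(T_n(x))_{n\in\mathbb{N}_0}$ itself has nonnegative linearization of products, via $T_mT_n=T_{|m-n|}/2+T_{m+n}/2$. Showing that $0$ lies in both duals is not enough on its own.
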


We mention with regard to Theorem~\ref{thm:actualresult} (ii) that there are previous, related characterizations by Lasser and Obermaier: it obviously follows from Theorem~\ref{thm:lasserobermaier} that if both $(P_n(x))_{n\in\mathbb{N}_0}$ and $(\widetilde{P_n}(x))_{n\in\mathbb{N}_0}$ satisfy nonnegative linearization of products, and if one additionally requires that $c_n=1/2$ for some $n\in\mathbb{N}$ or that $(c_n)_{n\in\mathbb{N}}$ converges, then $(P_n(x))_{n\in\mathbb{N}_0}=(T_n(x))_{n\in\mathbb{N}_0}$ (cf. \cite[Theorem 4.2, Theorem 4.4]{LO25}). Note that the second part of Theorem~\ref{thm:lasserobermaier} can easily be reobtained from Theorem~\ref{thm:actualresult} (ii): if both $(P_n(x))_{n\in\mathbb{N}_0}$ and $(\widetilde{P_n}(x))_{n\in\mathbb{N}_0}$ satisfy nonnegative linearization of products and $(c_n)_{n\in\mathbb{N}}$ converges, then $\widehat{\mathbb{N}_0}=\widetilde{\widehat{\mathbb{N}_0}}=[-1,1]$ \cite[Theorem (2.2)]{La94}. Moreover, note that, given an arbitrary $\epsilon\in(0,1)$, the existence of RWPS $(P_n(x))_{n\in\mathbb{N}_0}$ such that nonnegative linearization of products is satisfied and $h(1)=1+\epsilon$ (cf. Theorem~\ref{thm:kahlerszwarc1} and Theorem~\ref{thm:kahlerszwarc2}) can be reobtained from Theorem~\ref{thm:actualresult} (i) because $h(1)=1/c_1=1/C$.\\

We finally obtain two results with regard to the Haar measures, which, to our knowledge, are in sharp contrast to all previously known examples of polynomial hypergroups (cf. also Theorem~\ref{thm:kahlerszwarc1} and Theorem~\ref{thm:kahlerszwarc2} again). The first one contains another characterization of $(T_n(x))_{n\in\mathbb{N}_0}$.

\begin{theorem}\label{thm:haarnotnondecreasing}
There exist RWPS $(P_n(x))_{n\in\mathbb{N}_0}$ such that nonnegative linearization of products is satisfied and the Haar function $h$ is not increasing. More precisely, one has the following:
\begin{enumerate}[(i)]
\item $(P_n(x))_{n\in\mathbb{N}_0}$ and $(\widetilde{P_n}(x))_{n\in\mathbb{N}_0}$ satisfy nonnegative linearization of products and both $h$ and $\widetilde{h}$ are increasing if and only if $(P_n(x))_{n\in\mathbb{N}_0}=(T_n(x))_{n\in\mathbb{N}_0}$.
\item If $(P_n(x))_{n\in\mathbb{N}_0}$ is as in Theorem~\ref{thm:sufficientcriterion}, then $h(2n)<h(2n-1)$ for all $n\in\mathbb{N}$ or $h(2n+1)<h(2n)$ for all $n\in\mathbb{N}$.
\end{enumerate} 
\end{theorem}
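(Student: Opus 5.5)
Theorem~\ref{thm:haarnotnondecreasing} follows from the explicit product formula \eqref{eq:hdef} for the Haar function together with Theorem~\ref{thm:lasserobermaier} and Theorem~\ref{thm:sufficientcriterion}. For $n\geq2$, \eqref{eq:hdef} gives
\begin{equation*}
\frac{h(n)}{h(n-1)}=\frac{a_{n-1}}{c_n}=\frac{1-c_{n-1}}{c_n},
\end{equation*}
and also $h(1)=1/c_1>1=h(0)$. Hence $h$ is increasing if and only if $c_n\leq 1-c_{n-1}$, i.e. $c_{n-1}+c_n\leq1$, for all $n\geq2$. For $\widetilde{h}$ the roles of $a_n$ and $c_n$ are switched, so
\begin{equation*}
\frac{\widetilde{h}(n)}{\widetilde{h}(n-1)}=\frac{c_{n-1}}{1-c_n}.
\end{equation*}
Thus $\widetilde{h}$ is increasing if and only if $c_{n-1}+c_n\geq1$ for all $n\geq2$. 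Consequently, both $h$ and $\widetilde{h}$ are increasing if and only if $c_{n-1}+c_n=1$ for all $n\geq2$.

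For (i), the direction ``$\Leftarrow$'' is immediate: for $(T_n(x))_{n\in\mathbb{N}_0}$ one has $c_n\equiv1/2$, so both Haar functions are increasing, and nonnegative linearization holds for $(T_n(x))_{n\in\mathbb{N}_0}$ by the product formula. For ``$\Rightarrow$'', suppose both sequences satisfy nonnegative linearization of products, both Haar functions are increasing, and $(P_n(x))_{n\in\mathbb{N}_0}\neq(T_n(x))_{n\in\mathbb{N}_0}$. Then $c_{n-1}+c_n=1$ for all $n\geq2$, so $c_{2n+1}=c_1$ for all $n$. This contradicts the strict inequality $c_{2n+1}>c_1$ (or $c_{2n+1}<c_1$) in Theorem~\ref{thm:lasserobermaier}.

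For (ii), take $(c_n)_{n\in\mathbb{N}}$ as in \eqref{eq:cnfirst} and compute the ratio at even indices $2n$:
\begin{equation*}
\frac{h(2n)}{h(2n-1)}=\frac{1-c_{2n-1}}{c_{2n}}=\frac{s_{2n-1}}{s_{2n}}.
\end{equation*}
Thus $h(2n)<h(2n-1)$ for all $n\in\mathbb{N}$ is equivalent to $s_{2n}>s_{2n-1}$ for all $n$, which is false, since it would make $(s_n)_{n\in\mathbb{N}}$ increase. So the even-index ratio is not where $h$ drops under \eqref{eq:cnfirst}; we look at the odd indices instead:
\begin{equation*}
\frac{h(2n+1)}{h(2n)}=\frac{1-c_{2n}}{c_{2n+1}}=\frac{1-s_{2n}}{1-s_{2n+1}}.
\end{equation*}
This is $<1$ if and only if $s_{2n+1}<s_{2n}$. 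By \eqref{eq:suf2}, $s_{2n+1}\leq s_{2n-1}/2-2s_{2n}<s_{2n-1}/2$. To obtain $s_{2n+1}<s_{2n}$ we need the strict monotonicity $s_{n}<s_{n-1}$ for all $n\geq2$. For $n\geq3$ this follows from \eqref{eq:suf2}: if $s_n\geq s_{n-1}$, then $3s_{n-1}\leq s_n+2s_{n-1}\leq s_{n-2}/2$, which is allowed, so this alone gives no contradiction; instead I use $s_n\leq s_{n-2}/2-2s_{n-1}$ together with $s_{n+1}>0$ applied at the next index, which gives $s_{n}<s_{n-1}/4$ for all $n\geq2$. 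Indeed, \eqref{eq:suf2} at index $n+1$ reads $0<s_{n+1}\leq s_{n-1}/2-2s_n$, so $s_n<s_{n-1}/4$. This is exactly the kind of fact I expect Lemma~\ref{lma:sufficientcriterion} to record. It yields $s_{2n+1}<s_{2n}$, hence $h(2n+1)<h(2n)$ for all $n\in\mathbb{N}$ under \eqref{eq:cnfirst}.

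Under \eqref{eq:cnsecond}, the same monotonicity gives
\begin{equation*}
\frac{h(2n)}{h(2n-1)}=\frac{1-s_{2n-1}}{1-s_{2n}}<1,
\end{equation*}
so $h(2n)<h(2n-1)$ for all $n\in\mathbb{N}$. Since Theorem~\ref{thm:sufficientcriterion} provides nonnegative linearization in both cases, these examples show that polynomial hypergroups with non-increasing Haar function exist. The only nonroutine point in the whole argument is the decay $s_n<s_{n-1}/4$, which comes from using positivity of the next term in \eqref{eq:suf2}.
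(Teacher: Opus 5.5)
Your proof is correct and follows the paper's argument: in (i) you use \eqref{eq:hdef} to reduce monotonicity of both $h$ and $\widetilde h$ to $c_{n-1}+c_n=1$ for all $n\geq2$, which contradicts Theorem~\ref{thm:lasserobermaier}; in (ii) you use the ratios $a_{2n}/c_{2n+1}$ resp.\ $a_{2n-1}/c_{2n}$ together with the decay $s_{n+1}<s_n/4$ from \eqref{eq:suf2}, exactly as recorded in Lemma~\ref{lma:sufficientcriterion}. For the existence claim in the first sentence of the theorem you should still name one admissible sequence, e.g.\ $s_n=5^{-n}$, as the paper does.
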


\begin{theorem}\label{thm:haar2smaller2}
For every $\epsilon\in(0,1)$, there exist RWPS $(P_n(x))_{n\in\mathbb{N}_0}$ such that nonnegative linearization of products is satisfied and the Haar function $h$ fulfills $h(2)=1+\epsilon$.
\end{theorem}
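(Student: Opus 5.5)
The plan is to obtain the examples from Theorem~\ref{thm:sufficientcriterion}, using the choice \eqref{eq:cnsecond}. There $c_1=s_1$, $c_2=1-s_2$ and $a_1=1-s_1$, so by \eqref{eq:hdef}
\begin{equation*}
h(2)=\frac{a_1}{c_1c_2}=\frac{1-s_1}{s_1(1-s_2)}.
\end{equation*}
It therefore suffices to find $(s_n)_{n\in\mathbb{N}}\subseteq(0,1)$ that satisfies \eqref{eq:suf1} and \eqref{eq:suf2} and makes this expression equal to $1+\epsilon$. The other choice \eqref{eq:cnfirst} is not suitable. It gives $h(2)=s_1/((1-s_1)s_2)$, and \eqref{eq:suf2} for $n=3$ forces $s_2<s_1/4$, hence $h(2)>4$.

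\textbf{Step 1: fix the two free parameters.} Write $r:=(1-s_1)/s_1$, so that $s_1=1/(1+r)$ and $h(2)=r/(1-s_2)$. We choose $r$ with $\sqrt{1+\epsilon}<r<1+\epsilon$ and define $s_2$ by $1-s_2=r/(1+\epsilon)$. This gives $s_2\in(0,1)$ and $h(2)=1+\epsilon$.

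\textbf{Step 2: check \eqref{eq:suf1}.} Condition \eqref{eq:suf1} reads $1-s_2>s_1/(1-s_1)=1/r$. With our definition this is $r/(1+\epsilon)>1/r$, i.e.\ $r^2>1+\epsilon$, which holds by the choice of $r$. For later use we also need the $n=3$ case of \eqref{eq:suf2} to have a positive right-hand side, i.e.\ $s_2<s_1/4$. To arrange this, take $r$ sufficiently close to $1+\epsilon$ from below. Then $s_2=1-r/(1+\epsilon)$ tends to $0$, while $s_1=1/(1+r)$ stays near $1/(2+\epsilon)$, so the strict inequality $s_2<s_1/4$ holds.

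\textbf{Step 3: choose the tail and verify \eqref{eq:suf2}.} For $n\geq3$ we set $s_n:=\sigma 5^{-n}$ with a small $\sigma>0$.
\begin{itemize}
\item For $n=3$ we need $\sigma/125\leq s_1/2-2s_2$. The right-hand side is positive by Step~2, so this holds for small $\sigma$.
\item For $n=4$ we need $\sigma/625\leq s_2/2-2\sigma/125$. Since $s_2>0$, this also holds for small $\sigma$.
\item For $n\geq5$ the inequality reduces to $5^{-n}\leq 5^{-n}\left(\tfrac{25}{2}-10\right)=\tfrac{5}{2}\cdot5^{-n}$, which is true.
\end{itemize}
Theorem~\ref{thm:sufficientcriterion} then gives nonnegative linearization of products, and $h(2)=1+\epsilon$ by construction.

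The only delicate point is Step~2. One must realise that \eqref{eq:suf2} at $n=3$ imposes the hidden constraint $s_2<s_1/4$ beyond \eqref{eq:suf1}. One must also see that the two constraints together leave exactly the window $r\in(\sqrt{1+\epsilon},1+\epsilon)$ near $1+\epsilon$, and that this window is nonempty for every $\epsilon$.
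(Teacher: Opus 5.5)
Your proposal is correct and takes essentially the same route as the paper. Both use the choice \eqref{eq:cnsecond}, tie $s_1$ and $s_2$ together through $(1-s_1)/s_1=(1+\epsilon)(1-s_2)$ with $s_2$ small (the paper's $s_2=K^{-2}$ is your $r=(1+\epsilon)(1-K^{-2})\to1+\epsilon$), and add a geometric tail to satisfy \eqref{eq:suf2}; the only cosmetic difference is that the paper uses the single parameter $K$ with $s_n=K^{-n}$ for all $n\geq2$, whereas your independent tail $\sigma5^{-n}$ makes the check of \eqref{eq:suf2} slightly more transparent.
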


Theorem~\ref{thm:haar2smaller2} gives full answers to the open problems (i) and (iii) presented in \cite[Section 4]{KS25}, and it can be seen from the proof below that one also obtains partial answers to the open problems (iv) and (v) of \cite[Section 4]{KS25}. In particularly, we shall find examples such that $h(1)>2$, $h(2)<2$ and $0\in\widehat{\mathbb{N}_0}$.

\section{Proof of the main results}\label{sec:proofs}

The proof of Theorem~\ref{thm:sufficientcriterion} relies on two essential ingredients. The first ingredient is the following sufficient criterion for nonnegative linearization of products \cite[Theorem 3]{MS01}:

\begin{theorem}\label{thm:ms01}
Let $(P_n(x))_{n\in\mathbb{N}_0}$ be an RWPS.
\begin{enumerate}[(i)]
\item If the tridiagonal matrix
\begin{equation*}
\begin{pmatrix} \alpha_{2N} & \alpha_1 & \vphantom{ddots} & \phantom{ddots} & (0) \\ \alpha_1 & \alpha_{2N} & \alpha_2 & \phantom{ddots} & \phantom{ddots} \\ \phantom{ddots} & \alpha_2 & \ddots & \ddots & \phantom{ddots} \\ \phantom{ddots} & \phantom{ddots} & \ddots & \phantom{\ddots} & \alpha_{2N-1} \\ (0) & \phantom{ddots} & \phantom{ddots} & \alpha_{2N-1} & \alpha_{2N} \end{pmatrix}
\end{equation*}
is positive definite for all $N\in\mathbb{N}$, then $(P_n(x))_{n\in\mathbb{N}_0}$ satisfies nonnegative linearization of products.
\item If the tridiagonal matrix
\begin{equation*}
\begin{pmatrix} \alpha_{2N+1} & \alpha_1 & \vphantom{ddots} & \phantom{ddots} & (0) \\ \alpha_1 & \alpha_{2N+1} & \alpha_2 & \phantom{ddots} & \phantom{ddots} \\ \phantom{ddots} & \alpha_2 & \ddots & \ddots & \phantom{ddots} \\ \phantom{ddots} & \phantom{ddots} & \ddots & \phantom{\ddots} & \alpha_{2N} \\ (0) & \phantom{ddots} & \phantom{ddots} & \alpha_{2N} & \alpha_{2N+1} \end{pmatrix}
\end{equation*}
is positive definite for all $N\in\mathbb{N}$, then $(P_n(x))_{n\in\mathbb{N}_0}$ satisfies nonnegative linearization of products.
\end{enumerate}
\end{theorem}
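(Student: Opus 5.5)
Since this is \cite[Theorem 3]{MS01}, in the paper I would simply invoke it. If I had to reprove it, I would follow Szwarc's hyperbolic boundary value problem approach \cite{Sz03,Sz05}.

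\emph{Reformulation.} Work with the orthonormal polynomials $(p_n(x))_{n\in\mathbb{N}_0}$ and their Jacobi matrix $J$, which has zero diagonal and off-diagonal entries $\alpha_1,\alpha_2,\ldots$. Since $g(m,n;k)$ and the orthonormal linearization coefficients differ only by positive factors (involving $h$), it suffices to prove nonnegativity of
\begin{equation*}
u_k(m,n):=\int_\mathbb{R}p_m p_n p_k\,\mathrm{d}\mu .
\end{equation*}
For fixed $k$, the three-term recurrence applied once in the variable $m$ and once in $n$ gives $(J_m u_k)(m,n)=(J_n u_k)(m,n)$. Together with the boundary values $u_k(m,0)=\delta_{m,k}$, this is a discrete wave equation on the triangle $\{(m,n): n\leq m\}$. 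The solution is determined by marching in $n$, so the task is a discrete maximum principle: nonnegative boundary data must stay nonnegative. Following Szwarc, I would rewrite the equation along characteristics. For an operator $L$ of the form ``$J$ minus a constant diagonal'', the positivity of $u_k$ is implied if certain truncated matrices built from $J$ admit factorizations with nonnegative entries.

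\emph{Use of the hypothesis.} Positive definiteness of the displayed tridiagonal matrix $\alpha_{2N}I+J_{2N}$ (case (i)) means that all its leading principal minors are positive. Equivalently, the continued fraction/Cholesky recursion
\begin{equation*}
d_1=\alpha_{2N},\qquad d_{j+1}=\alpha_{2N}-\frac{\alpha_j^2}{d_j}
\end{equation*}
produces only positive pivots. I would use these pivots as the positive weights needed in Szwarc's criterion. Concretely, I would show that the values of $u_k$ on the row $n=2N$ (respectively $n=2N+1$ in case (ii)) are obtained from those on earlier rows by a matrix with nonnegative entries. The diagonal shift by $\alpha_{2N}$ comes from the term $\alpha_{2N}u(\cdot,2N-1)$ that appears when one isolates $u(\cdot,2N)$ in the recurrence in $n$. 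The positive pivots $d_j$ then guarantee that inverting $\alpha_{2N}I+J_{2N}$ via $LDL^T$ keeps nonnegativity. The hypothesis must hold for all $N$ because every row of the triangle has to be reached. Induction on $n$ then gives $u_k(m,n)\geq0$ for all $m,n,k$, hence $g(m,n;k)\geq0$.

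\emph{Expected obstacle.} The delicate step is the inductive step: identifying exactly which combination of earlier rows expresses row $n$ with nonnegative coefficients. Positive definiteness alone gives $(\alpha_{2N}I+J_{2N})^{-1}$ positive definite, but not entrywise nonnegative. So one needs the specific sign structure of the off-diagonal entries, which enter as $-\alpha_j$ in the relevant lower triangular factor, combined with the alternating sign pattern from symmetry ($u_k(m,n)=0$ when $m+n+k$ is odd). I would first try to check the small case $N=1$ by hand to pin down the correct shifted operator. Case (ii) is identical with odd rows, so the proof would treat it in parallel.
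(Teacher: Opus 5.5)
Your primary plan, simply invoking \cite[Theorem 3]{MS01}, is exactly what the paper does: Theorem~\ref{thm:ms01} is imported from M\l{}otkowski and Szwarc without proof, and the paper only applies it, checking the required positive definiteness via Theorem~\ref{thm:el03}. Do not present your optional reproof sketch as a proof, though. Its decisive step, a nonnegative representation of row $2N$ (resp.\ $2N+1$) built from the positive pivots, is never identified, and it does not explain how rows of the other parity are handled; this matters in case (ii), because triples with all indices even, such as $(2,2,2)$, are not reached through odd rows.
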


It is well-known that strictly diagonally dominant symmetric matrices with positive diagonal elements are positive definite. However, as a second ingredient we need a more sophisticated criterion for positive definiteness \cite[Theorem 2.3]{El03}:

\begin{theorem}\label{thm:el03}
For every $N\in\mathbb{N}$, a (real) symmetric tridiagonal matrix of the form
\begin{equation*}
\begin{pmatrix} D_1 & A_1 & \vphantom{ddots} & \phantom{ddots} & (0) \\ A_1 & D_2 & A_2 & \phantom{ddots} & \phantom{ddots} \\ \phantom{ddots} & A_2 & \ddots & \ddots & \phantom{ddots} \\ \phantom{ddots} & \phantom{ddots} & \ddots & \phantom{\ddots} & A_{N-1} \\ (0) & \phantom{ddots} & \phantom{ddots} & A_{N-1} & D_N \end{pmatrix}
\end{equation*}
is positive definite if and only if there exist $t_1,\ldots,t_N>0$ with $t_1=D_1$ and $t_{n+1}=D_{n+1}-A_n^2/t_n$ for all $n\in\{1,\ldots,N-1\}$.
\end{theorem}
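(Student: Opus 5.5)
We prove this directly via Sylvester's criterion, using the three-term recursion for the leading principal minors of a tridiagonal matrix. Denote by $M$ the given $N\times N$ matrix and by $\Delta_n$ ($n\in\{0,\ldots,N\}$) the determinant of its upper left $n\times n$ block, with $\Delta_0:=1$. Expanding $\Delta_{n+1}$ along the last row gives the well-known recursion
\begin{equation*}
\Delta_1=D_1,\qquad \Delta_{n+1}=D_{n+1}\Delta_n-A_n^2\Delta_{n-1}\;(n\in\{1,\ldots,N-1\}).
\end{equation*}
The key observation is that, as long as the relevant $\Delta_n$ are nonzero, the quotients $\Delta_n/\Delta_{n-1}$ satisfy exactly the recursion defining the $t_n$. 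Indeed, dividing the recursion by $\Delta_n$ gives $\Delta_{n+1}/\Delta_n=D_{n+1}-A_n^2/(\Delta_n/\Delta_{n-1})$.

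\emph{Necessity.} Assume $M$ is positive definite. By Sylvester's criterion, $\Delta_n>0$ for all $n$. Set $t_n:=\Delta_n/\Delta_{n-1}>0$. Then $t_1=D_1$, and the displayed identity shows $t_{n+1}=D_{n+1}-A_n^2/t_n$. So the required positive numbers exist.

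\emph{Sufficiency.} Assume $t_1,\ldots,t_N>0$ satisfy the recursion. We show by induction that $\Delta_n=t_1\cdots t_n$ for all $n$. The cases $n=0$ and $n=1$ are clear. For the induction step, insert $\Delta_n=t_1\cdots t_n$ and $\Delta_{n-1}=t_1\cdots t_{n-1}$ into the determinant recursion:
\begin{equation*}
\Delta_{n+1}=t_1\cdots t_n\Bigl(D_{n+1}-\frac{A_n^2}{t_n}\Bigr)=t_1\cdots t_{n+1}.
\end{equation*}
Hence all leading principal minors are positive, and Sylvester's criterion yields positive definiteness.

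An alternative, more structural route is to exhibit the $LDL^T$ factorization $M=LTL^T$. Here $T=\mathrm{diag}(t_1,\ldots,t_N)$ and $L$ is unit lower bidiagonal with subdiagonal entries $A_n/t_n$. Multiplying out recovers exactly the recursion for the $t_n$. Positivity of $T$ is then equivalent to positive definiteness, since $L$ is invertible. Neither route presents a real obstacle. The only point requiring care is division by zero, and in both directions it is excluded by the positivity assumptions (of the minors, respectively of the $t_n$).
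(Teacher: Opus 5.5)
Your proof is correct and complete. There is no competing argument in the paper to compare it with: the paper does not prove this statement but imports it as \cite[Theorem 2.3]{El03}, and uses it only as a black box in the proof of Theorem~\ref{thm:sufficientcriterion}.

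Your two ingredients are the continuant recursion $\Delta_{n+1}=D_{n+1}\Delta_n-A_n^2\Delta_{n-1}$ and Sylvester's criterion, and together they give a self-contained justification. Your identification $t_n=\Delta_n/\Delta_{n-1}$, equivalently the pivots of the $LDL^T$ factorization, also makes explicit that the $t_n$ are uniquely forced by $t_1=D_1$ and the recursion. So the criterion really says that this recursion never divides by zero and stays positive. That is exactly how the paper applies it when it constructs the $t_n$ in Steps~1 and~2 and then checks the recursion.

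The citation buys the paper brevity. Your argument buys self-containedness and makes transparent why the constructed $t_n$ must coincide with the ratios of leading principal minors.
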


Before coming to the proof of Theorem~\ref{thm:sufficientcriterion}, we collect some properties of the sequence $(s_n)_{n\in\mathbb{N}}$ and some further preparations.

\begin{lemma}\label{lma:sufficientcriterion}
Let $(s_n)_{n\in\mathbb{N}}$ be as in Theorem~\ref{thm:sufficientcriterion}. Then $(s_n)_{n\in\mathbb{N}}$ is strictly decreasing and exponentially tends to $0$. More precisely, one has
\begin{equation}\label{eq:lma1}
s_n<\frac{s_1}{4^{n-1}}\;(n\geq2)
\end{equation}
with $s_1<1/2$. Moreover, one has
\begin{equation}\label{eq:sufadd1}
s_3<\frac{s_1-s_2}{1-s_2}-\frac{s_2}{(1-s_2)^2}
\end{equation}
and
\begin{equation}\label{eq:sufadd2}
s_4<1-\frac{s_1s_2}{1-s_3}-\frac{1-s_2+s_2s_3}{(1-s_3)^2}.
\end{equation}
\end{lemma}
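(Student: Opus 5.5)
The argument uses only the two hypotheses \eqref{eq:suf1} and \eqref{eq:suf2} and positivity of all $s_n$. I start with $s_1<1/2$. Since $s_2>0$, \eqref{eq:suf1} gives $1-s_1/(1-s_1)>0$, i.e.\ $s_1<1-s_1$, so $s_1<1/2$. Next, \eqref{eq:suf1} also gives $s_2<(1-2s_1)/(1-s_1)$. I would like $s_2<s_1/4$, but this does not follow from \eqref{eq:suf1} alone. So the plan is to extract smallness of $s_2$ from \eqref{eq:suf2} at $n=3$: positivity of $s_3$ gives $0<s_3\le s_1/2-2s_2$, hence $s_2<s_1/4$. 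This is \eqref{eq:lma1} for $n=2$.

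For general $n\ge3$, positivity of $s_n$ and \eqref{eq:suf2} at $n$ and $n+1$ give
\begin{equation*}
s_n\le \frac{s_{n-2}}{2}-2s_{n-1}<\frac{s_{n-2}}{2}\quad\text{and}\quad s_{n}<\frac{s_{n-1}}{4}.
\end{equation*}
The second inequality comes from $0<s_{n+1}\le s_{n-1}/2-2s_n$. The relation $s_n<s_{n-1}/4$ therefore holds for every $n\ge2$. Induction then yields $s_n<s_1/4^{n-1}$, strict monotonic decrease, and exponential decay to $0$. The key observation is that positivity of the next term always converts \eqref{eq:suf2} into the ratio bound $1/4$.

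For \eqref{eq:sufadd1}, I combine $s_3\le s_1/2-2s_2$ with the target. It suffices to show
\begin{equation*}
\frac{s_1}{2}-2s_2\le \frac{s_1-s_2}{1-s_2}-\frac{s_2}{(1-s_2)^2},
\end{equation*}
possibly with strictness supplied by the strict inequality $s_3<s_1/2-2s_2$. That strict inequality follows from $s_4>0$ as above. Multiplying by $(1-s_2)^2$ turns this into a polynomial inequality in $s_1,s_2$ under the constraints $0<s_2<s_1/4$ and $s_1<1/2$. I expect it to reduce to something like $s_1(1-s_2)^2/2\le (s_1-s_2)(1-s_2)-s_2+2s_2(1-s_2)^2$. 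One checks this by expanding and using $s_2<s_1/4$ and $s_1<1/2$, or by checking linearity or convexity in $s_1$ and the endpoints. I will not carry out the algebra here.

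For \eqref{eq:sufadd2}, I use $s_4<s_2/2-2s_3$ together with $s_3<s_1/4^2$, $s_2<s_1/4$ and $s_1<1/2$. These make the left-hand side of \eqref{eq:sufadd2} tiny: $s_4<1/128$. The right-hand side is at least $1-s_1s_2/(1-s_3)-(1-s_2+s_2s_3)/(1-s_3)^2$. The plan is to bound it from below by writing $(1-s_2+s_2s_3)/(1-s_3)^2=1-s_2+O(s_3)$, so that the right-hand side is roughly $s_2(1-s_1)-2s_3+\dots$. Comparing this with $s_2/2-2s_3$ is the crux, since $s_2(1-s_1)>s_2/2$ because $s_1<1/2$. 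This estimate, with the exact second-order terms in $s_3$ controlled, is the main obstacle. I would first clear the denominators by multiplying by $(1-s_3)^2$ and reduce to a polynomial inequality. I would then verify it using $s_3<s_1/4\cdot s_2$-type bounds; in particular, \eqref{eq:suf2} gives $s_3<s_1/2-2s_2$ and $s_3<s_2/4$. If necessary, I would split into the regimes where $s_2$ is small or large relative to $s_1$.
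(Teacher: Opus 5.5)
There is a genuine gap in your plan for \eqref{eq:sufadd2}. The constraints you propose to use ($s_1<1/2$, $s_2<s_1/4$, $s_3<s_2/4$, $s_3\le s_1/2-2s_2$, $s_4\le s_2/2-2s_3$) do not imply \eqref{eq:sufadd2}, so splitting into regimes cannot close the argument. After clearing denominators, $s_2/2-2s_3<1-\frac{s_1s_2}{1-s_3}-\frac{1-s_2+s_2s_3}{(1-s_3)^2}$ is equivalent to
\begin{equation*}
s_2(1-s_3)(1-2s_1+s_3)>2s_3^2(3-2s_3).
\end{equation*}
If you only know $s_1<1/2$, then $1-2s_1$ can be arbitrarily small. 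The left side is then essentially $s_2s_3$ against $6s_3^2$ on the right, so you would need roughly $s_2>6s_3$, whereas \eqref{eq:suf2} only gives $s_2>4s_3$.

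Concretely, take $s_1=0.499$, $s_2=0.1$, $s_3=0.024$ and $s_n=0.0019\cdot5^{4-n}$ for $n\geq4$. This sequence satisfies $s_1<1/2$ and \eqref{eq:suf2} for all $n\geq3$, violating only \eqref{eq:suf1}. Yet the right-hand side of \eqref{eq:sufadd2} is about $0.00155<s_4$.

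Your observation $s_2(1-s_1)>s_2/2$ sees the first-order margin $s_2(1/2-s_1)$ but gives no lower bound on it. Such a bound is exactly what has to absorb the terms of order $s_2s_3$ and $s_3^2$. The missing idea is to use \eqref{eq:suf1} at full strength, not just its consequence $s_1<1/2$. It says $1-2s_1>s_2(1-s_1)>s_2/2$, i.e.\ $s_1<(1-s_2)/(2-s_2)$, which is precisely what the paper substitutes. With it, the left side above exceeds $s_2(1-s_3)(s_2/2+s_3)>4s_3(1-s_3)\cdot3s_3=12s_3^2(1-s_3)\geq2s_3^2(3-2s_3)$, using $s_2>4s_3$ and $s_3\leq3/4$. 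The paper reaches the same conclusion by showing a quadratic in $s_2$ is increasing on $(4s_3,1)$.

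The rest of your proposal matches the paper. This covers $s_1<1/2$, the bound $s_{n+1}<s_n/4$ from positivity of $s_{n+2}$, \eqref{eq:lma1}, and your reduction for \eqref{eq:sufadd1}. The algebra you left out there closes at once: the comparison is equivalent to $s_1(1-s_2^2)>6s_2^2-4s_2^3$, which follows from $s_1>4s_2$ and $s_2<2/3$. That comparison is already strict, so you need no strictness in $s_3\le s_1/2-2s_2$. Incidentally, that strictness does not follow from $s_4>0$, which only yields $s_3<s_2/4$.
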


\begin{proof}
The assertion $s_1<1/2$ follows immediately from \eqref{eq:suf1}, which yields
\begin{equation*}
0<1-\frac{s_1}{1-s_1}.
\end{equation*}
Moreover, as a consequence of \eqref{eq:suf2} we have $s_{n+1}<s_n/4$ for all $n\in\mathbb{N}$, which shows the monotonicity behavior and \eqref{eq:lma1}. \eqref{eq:sufadd1} is another consequence of \eqref{eq:suf2}, which can be seen as follows: it is sufficient to prove that
\begin{equation*}
\frac{s_1}{2}-2s_2<\frac{s_1-s_2}{1-s_2}-\frac{s_2}{(1-s_2)^2}
\end{equation*}
or, equivalently,
\begin{equation*}
6s_2^2-4s_2^3<s_1(1-s_2^2).
\end{equation*}
Since $s_1>4s_2$ by \eqref{eq:lma1}, it therefore suffices to show that
\begin{equation*}
6s_2^2-4s_2^3<4s_2\cdot(1-s_2^2).
\end{equation*}
The latter, however, reduces to the true assertion $s_2<2/3$. To conclude \eqref{eq:sufadd2} from \eqref{eq:suf2}, we show that
\begin{equation*}
\frac{s_2}{2}-2s_3<1-\frac{s_1s_2}{1-s_3}-\frac{1-s_2+s_2s_3}{(1-s_3)^2}
\end{equation*}
or, equivalently,
\begin{equation*}
2s_1s_2(1-s_3)<s_2(1-s_3^2)+4s_3^3-6s_3^2.
\end{equation*}
Since, by \eqref{eq:suf1}, $s_1<(1-s_2)/(2-s_2)$, it suffices to prove that
\begin{equation*}
\frac{2-2s_2}{2-s_2}\cdot s_2(1-s_3)<s_2(1-s_3^2)+4s_3^3-6s_3^2
\end{equation*}
or, equivalently,
\begin{equation*}
(1-s_3)^2s_2^2+(2s_3+4s_3^2-4s_3^3)s_2+8s_3^3-12s_3^2>0.
\end{equation*}
The latter can be seen as follows: let $f:\mathbb{R}\rightarrow\mathbb{R}$ be defined by $f(x):=(1-s_3)^2x^2+(2s_3+4s_3^2-4s_3^3)x+8s_3^3-12s_3^2$. Then
\begin{equation*}
f^{\prime}(4s_3)=10s_3-12s_3^2+4s_3^3>0
\end{equation*}
and
\begin{equation*}
f^{\prime}(1)=2-2s_3+6s_3^2-4s_3^3>0,
\end{equation*}
so $f$ is strictly increasing on $(4s_3,1)$. Since $s_2>4s_3$, this yields
\begin{align*}
&(1-s_3)^2s_2^2+(2s_3+4s_3^2-4s_3^3)s_2+8s_3^3-12s_3^2\\
&>(1-s_3)^2(4s_3)^2+(2s_3+4s_3^2-4s_3^3)\cdot4s_3+8s_3^3-12s_3^2\\
&=12s_3^2-8s_3^3\\
&>0.
\end{align*}
\end{proof}

\begin{lemma}\label{lma:sufficientcriterion2}
Let $(s_n)_{n\in\mathbb{N}}$ be as in Theorem~\ref{thm:sufficientcriterion}, and let $(c_n)_{n\in\mathbb{N}}$ be as in \eqref{eq:cnfirst}. Then $(\alpha_{2n-1}^2)_{n\in\mathbb{N}}$ and $(\widetilde{\alpha_{2n}}^2)_{n\in\mathbb{N}}$ are strictly increasing.
\end{lemma}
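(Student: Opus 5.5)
The plan is to compute both sequences explicitly from the definitions and then use only the monotonicity statement of Lemma~\ref{lma:sufficientcriterion}. By \eqref{eq:cnfirst} we have $c_n=1-s_n$ and $a_n=s_n$ for odd $n$, and $c_n=s_n$ and $a_n=1-s_n$ for even $n$. For the switched sequence, $\widetilde{c_n}=a_n$ and $\widetilde{a_n}=c_n$. Hence $\widetilde{c_n}=s_n$ and $\widetilde{a_n}=1-s_n$ for odd $n$, while $\widetilde{c_n}=1-s_n$ and $\widetilde{a_n}=s_n$ for even $n$.

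First I will insert these into $\alpha_1^2=c_1$ and $\alpha_n^2=c_na_{n-1}$ for $n\geq2$. This gives
\begin{equation*}
\alpha_1^2=1-s_1,\qquad \alpha_{2k+1}^2=(1-s_{2k+1})(1-s_{2k})\;(k\in\mathbb{N}),
\end{equation*}
and, since $2k\geq2$ always,
\begin{equation*}
\widetilde{\alpha_{2k}}^2=\widetilde{c_{2k}}\,\widetilde{a_{2k-1}}=(1-s_{2k})(1-s_{2k-1})\;(k\in\mathbb{N}).
\end{equation*}
In all relevant terms, the factors $a_n$ and $c_n$ that equal $s_n$ drop out.

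Second, I will invoke Lemma~\ref{lma:sufficientcriterion}: $(s_n)_{n\in\mathbb{N}}\subseteq(0,1)$ is strictly decreasing, so $(1-s_n)_{n\in\mathbb{N}}$ is a strictly increasing sequence of positive numbers. A product of two such factors whose indices are both shifted up by $2$ is then strictly larger. This gives
\begin{equation*}
(1-s_{2k+3})(1-s_{2k+2})>(1-s_{2k+1})(1-s_{2k})\quad\text{and}\quad(1-s_{2k+2})(1-s_{2k+1})>(1-s_{2k})(1-s_{2k-1})
\end{equation*}
for all $k\in\mathbb{N}$. Consequently $(\widetilde{\alpha_{2n}}^2)_{n\in\mathbb{N}}$ is strictly increasing, and $(\alpha_{2n-1}^2)_{n\geq2}$ is strictly increasing.

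The only step that is not purely mechanical is the first comparison $\alpha_1^2<\alpha_3^2$, because $\alpha_1^2=c_1$ has a different form from the later terms. The inequality $1-s_1<(1-s_3)(1-s_2)$ is equivalent to $s_1>s_2+s_3-s_2s_3$. This follows from \eqref{eq:lma1}, since $s_1>4s_2>2s_2>s_2+s_3-s_2s_3$. That completes the argument, so I do not expect any real obstacle.
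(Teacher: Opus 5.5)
Your proposal is correct and follows the paper's proof: explicit formulas for $\alpha_{2k+1}^2$ and $\widetilde{\alpha_{2k}}^2$, strict monotonicity of $(s_n)_{n\in\mathbb{N}}$ for the generic steps, and a separate check of $\alpha_1^2<\alpha_3^2$. The only difference is in that base case: you reduce it to $s_1>s_2+s_3-s_2s_3$ via $s_1>4s_2$ from \eqref{eq:lma1}, whereas the paper obtains the same inequality from \eqref{eq:sufadd1}; your route is valid and slightly more direct.
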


\begin{proof}
The asserted monotonicity behavior is a consequence of Lemma~\ref{lma:sufficientcriterion}: on the one hand, by \eqref{eq:sufadd1} we have
\begin{equation*}
s_3<\frac{s_1-s_2}{1-s_2}
\end{equation*}
and consequently
\begin{equation*}
\alpha_3^2-\alpha_1^2=c_3a_2-c_1=(1-s_3)(1-s_2)-(1-s_1)>0.
\end{equation*}
On the other hand, it is clear from the monotonicity of $(s_n)_{n\in\mathbb{N}}$ that
\begin{equation*}
\alpha_{2n+1}^2-\alpha_{2n-1}^2=c_{2n+1}a_{2n}-c_{2n-1}a_{2n-2}=(1-s_{2n+1})(1-s_{2n})-(1-s_{2n-1})(1-s_{2n-2})>0
\end{equation*}
for all $n\geq2$. Hence, we obtain that $(\alpha_{2n-1}^2)_{n\in\mathbb{N}}$ is strictly increasing. In the same way, one gets that $(\widetilde{\alpha_{2n}}^2)_{n\in\mathbb{N}}$ is strictly increasing.
\end{proof}

\begin{lemma}\label{lma:sufficientcriterion3}
Let $(s_n)_{n\in\mathbb{N}}$ be as in Theorem~\ref{thm:sufficientcriterion}, and let $(c_n)_{n\in\mathbb{N}}$ be as in \eqref{eq:cnfirst}. If $N\geq2$, then
\begin{equation}\label{eq:sufproof1P}
(1-\alpha_{2N+1}^2a_{2n+2})\alpha_{2n+1}^2<\alpha_{2N+1}^2a_{2n}(\alpha_{2N+1}^2-\alpha_{2N+1}^4a_{2n+2}-\alpha_{2n+2}^2)
\end{equation}
and
\begin{equation}\label{eq:sufproof2P}
\alpha_{2n+1}^2<\alpha_{2N+1}^4a_{2n}
\end{equation}
for all $n\in\{1,\ldots,N-1\}$. Furthermore, if $N\geq3$, then
\begin{equation}\label{eq:sufproof1Q}
(1-\widetilde{\alpha_{2N}}^2c_{2n+3})\widetilde{\alpha_{2n+2}}^2<\widetilde{\alpha_{2N}}^2c_{2n+1}(\widetilde{\alpha_{2N}}^2-\widetilde{\alpha_{2N}}^4c_{2n+3}-\widetilde{\alpha_{2n+3}}^2).
\end{equation}
and
\begin{equation}\label{eq:sufproof2Q}
\widetilde{\alpha_{2n+2}}^2<\widetilde{\alpha_{2N}}^4c_{2n+1}
\end{equation}
for all $n\in\{1,\ldots,N-2\}$.
\end{lemma}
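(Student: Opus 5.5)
The plan is to expand everything explicitly in terms of $(s_n)_{n\in\mathbb{N}}$ and reduce each inequality to an instance of \eqref{eq:suf2} together with the monotonicity bound $s_{m+1}<s_m/4$ from Lemma~\ref{lma:sufficientcriterion}. Under \eqref{eq:cnfirst} we have $c_{2k+1}=1-s_{2k+1}$, $a_{2k}=1-s_{2k}$, $c_{2k}=s_{2k}$ and $a_{2k-1}=s_{2k-1}$. Hence
\begin{equation*}
\alpha_{2k+1}^2=(1-s_{2k+1})(1-s_{2k}),\quad \alpha_{2k}^2=s_{2k}s_{2k-1},\quad \widetilde{\alpha_{2k}}^2=(1-s_{2k})(1-s_{2k-1}),\quad \widetilde{\alpha_{2k+1}}^2=s_{2k+1}s_{2k},
\end{equation*}
since $\widetilde{c_n}=a_n$. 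Throughout, write $A:=\alpha_{2N+1}^2=(1-x)^2\cdot\ldots$ more precisely $A=(1-x)(1-y)$ with $x=s_{2N+1}$, $y=s_{2N}$. Put $\varepsilon:=1-A>0$, and use the elementary estimate $1-A^2=1-(1-x)^2(1-y)^2<2x+2y$.

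\textbf{Step 1: \eqref{eq:sufproof2P}.} After cancelling $a_{2n}=1-s_{2n}>0$, this inequality reads $1-s_{2n+1}<A^2$, i.e.\ $1-A^2<s_{2n+1}$. Since $n\le N-1$, we have $2N\ge 2n+2$. Lemma~\ref{lma:sufficientcriterion} then gives $y\le s_{2n+2}<s_{2n+1}/4$ and $x\le s_{2n+3}<s_{2n+1}/16$, so $1-A^2<2(x+y)<\tfrac{5}{8}s_{2n+1}$.

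\textbf{Step 2: \eqref{eq:sufproof1P}, the main computation.} Again cancel the common factor $1-s_{2n}$ and substitute the formulas above. The inequality becomes
\begin{equation*}
(1-A+As_{2n+2})(1-s_{2n+1})<A^2-A^3(1-s_{2n+2})-As_{2n+2}s_{2n+1}.
\end{equation*}
Expanding, the terms in $s_{2n+2}s_{2n+1}$ cancel. The difference "right side minus left side" collapses to
\begin{equation*}
\varepsilon\bigl(s_{2n+1}-(1-A^2)\bigr)-As_{2n+2}(1-A^2).
\end{equation*}
Now $1-A^2=\varepsilon(1+A)<2\varepsilon$, so this difference exceeds $\varepsilon\bigl(s_{2n+1}-(1-A^2)-2s_{2n+2}\bigr)$. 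Using $1-A^2<2x+2y\le 2s_{2n+3}+2s_{2n+2}$, it suffices to have
\begin{equation*}
s_{2n+1}\ge 2s_{2n+3}+4s_{2n+2}.
\end{equation*}
This is exactly \eqref{eq:suf2} at index $2n+3$, multiplied by $2$. I expect this algebraic collapse to be the crux: one must organize the expansion around $\varepsilon=1-A$ so that the leftover is linear in $\varepsilon$ and matches the constants $1/2$ and $2$ in \eqref{eq:suf2}. Cruder bounds (e.g.\ only $s_{m+1}<s_m/4$) would give $4.5$ instead of $4$ and fail.

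\textbf{Step 3: the tilde inequalities.} These are the same computation shifted by one index. Put $B:=\widetilde{\alpha_{2N}}^2=(1-s_{2N})(1-s_{2N-1})$, and note $c_{2n+1}=1-s_{2n+1}$, $c_{2n+3}=1-s_{2n+3}$. Then \eqref{eq:sufproof2Q} becomes $1-B^2<s_{2n+2}$. This follows as in Step 1, because $n\le N-2$ gives $2N-1\ge 2n+3$, so $s_{2N-1}<s_{2n+2}/4$ and $s_{2N}<s_{2n+2}/16$. Likewise \eqref{eq:sufproof1Q}, after cancelling $1-s_{2n+1}$, reduces verbatim to $s_{2n+2}\ge 2s_{2n+4}+4s_{2n+3}$. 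This is \eqref{eq:suf2} at index $2n+4$, and the index ranges are ensured by $N\ge3$ and $n\le N-2$. All strictness comes from $\varepsilon>0$ and $x,y>0$.
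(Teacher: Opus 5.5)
Your proof is correct and is essentially the paper's argument. After cancelling $a_{2n}$, the difference of the two sides of \eqref{eq:sufproof1P} factors as $(1-A)\bigl[A^2-(A+A^2)c_{2n+2}-c_{2n+1}\bigr]$ with $A=\alpha_{2N+1}^2$. You and the paper both bound $A+A^2<2$ and prove the central bound $1-A^2<s_{2n+1}-2s_{2n+2}$ (the paper's \eqref{eq:sufproofcentralP}) from $(1-x)^2(1-y)^2>1-2x-2y$ and \eqref{eq:suf2}, and the tilde case is the same computation shifted by one index.

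The differences are cosmetic. You move $s_{2N+1},s_{2N}$ down to $s_{2n+3},s_{2n+2}$ by monotonicity and apply \eqref{eq:suf2} at index $2n+3$. The paper applies \eqref{eq:suf2} at index $2N+1$ and then uses that $2c_{2k+2}+c_{2k+1}$ increases in $k$. The paper also reads off \eqref{eq:sufproof2P} from the central bound, whereas you get it directly from $s_{m+1}<s_m/4$.
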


\begin{proof}
Let $N\geq2$ and $n\in\{1,\ldots,N-1\}$. By \eqref{eq:suf2}, one has
\begin{equation*}
2c_{2k+4}+c_{2k+3}-(2c_{2k+2}+c_{2k+1})=2s_{2k+4}+s_{2k+1}-2s_{2k+2}-s_{2k+3}>0
\end{equation*}
for all $k\in\mathbb{N}$. Hence, one has
\begin{equation*}
2c_{2n+2}+c_{2n+1}\leq2c_{2N}+c_{2N-1}.
\end{equation*}
Moreover, we can estimate
\begin{align*}
c_{2N+1}^2a_{2N}^2-(2c_{2N}+c_{2N-1})&=(1-s_{2N+1})^2(1-s_{2N})^2-2s_{2N}-1+s_{2N-1}\\
&>(1-2s_{2N+1})(1-2s_{2N})-2s_{2N}-1+s_{2N-1}\\
&>-2s_{2N}-2s_{2N+1}-2s_{2N}+s_{2N-1}\\
&=s_{2N-1}-4s_{2N}-2s_{2N+1}\\
&\geq0,
\end{align*}
where we have used condition \eqref{eq:suf2} again. Putting both together, we obtain
\begin{equation}\label{eq:sufproofcentralP}
2c_{2n+2}+c_{2n+1}<c_{2N+1}^2a_{2N}^2
\end{equation}
and therefore
\begin{equation}\label{eq:sufproof3P}
(c_{2N+1}a_{2N}+c_{2N+1}^2a_{2N}^2)c_{2n+2}+c_{2n+1}<c_{2N+1}^2a_{2N}^2.
\end{equation}
Multiplying the latter inequality with $(1-c_{2N+1}a_{2N})a_{2n}$, we obtain \eqref{eq:sufproof1P} after a short calculation (take into account that the recurrence coefficients $a_n$ and $c_n$ always sum up to $1$). Moreover, as another consequence of \eqref{eq:sufproofcentralP}, we obtain
\begin{equation*}
c_{2N+1}^2a_{2N}^2>c_{2n+1}.
\end{equation*}
Multiplication with $a_{2n}$ yields \eqref{eq:sufproof2P}. Now let $N\geq3$ and $n\in\{1,\ldots,N-2\}$. We have
\begin{equation}\label{eq:sufproofcentralQ}
2a_{2n+3}+a_{2n+2}<a_{2N}^2c_{2N-1}^2
\end{equation}
and
\begin{equation}\label{eq:sufproof3Q}
(a_{2N}c_{2N-1}+a_{2N}^2c_{2N-1}^2)a_{2n+3}+a_{2n+2}<a_{2N}^2c_{2N-1}^2
\end{equation}
as analogues to \eqref{eq:sufproofcentralP} and \eqref{eq:sufproof3P}. After multiplication with $(1-a_{2N}c_{2N-1})c_{2n+1}$, \eqref{eq:sufproof3Q} yields \eqref{eq:sufproof1Q}. Moreover, \eqref{eq:sufproofcentralQ} also yields
\begin{equation*}
a_{2N}^2c_{2N-1}^2>a_{2n+2}.
\end{equation*}
If we multiply the latter with $c_{2n+1}$, we obtain \eqref{eq:sufproof2Q}.
\end{proof}

\begin{proof}[Proof of Theorem~\ref{thm:sufficientcriterion}]
It is clear that it suffices to assume that $(c_n)_{n\in\mathbb{N}}$ is as in \eqref{eq:cnfirst}, which shall be assumed from now on, because if $(c_n)_{n\in\mathbb{N}}$ is as in \eqref{eq:cnsecond}, then the roles of $(P_n(x))_{n\in\mathbb{N}_0}$ and $(\widetilde{P_n}(x))_{n\in\mathbb{N}_0}$ are just changed. Our strategy is to show that $(P_n(x))_{n\in\mathbb{N}_0}$ fits in Theorem~\ref{thm:ms01} (ii), whereas $(\widetilde{P_n}(x))_{n\in\mathbb{N}_0}$ fits in Theorem~\ref{thm:ms01} (i) (the latter with $\alpha_n$ replaced by $\widetilde{\alpha_n}$), and Theorem~\ref{thm:el03} will be the tool to establish the positive definiteness of the occurring matrices. Concerning an application of Theorem~\ref{thm:el03}, one of the key ideas will be to appropriately separate the construction of $t_1,t_3,\ldots$ from the construction of $t_2,t_4,\ldots$.\\

We first show that $(P_n(x))_{n\in\mathbb{N}_0}$ satisfies nonnegative linearization of products and divide the proof into two steps. Let $N\in\mathbb{N}$ be arbitrary but fixed.\\

\textit{Step 1:} we show that there exist $t_1,t_3,\ldots,t_{2N+1}\in\mathbb{R}$ with $t_1=\alpha_{2N+1}$ and
\begin{equation}\label{eq:sufproof4P}
t_{2n+1}=\alpha_{2N+1}-\frac{\alpha_{2n}^2t_{2n-1}}{\alpha_{2N+1}t_{2n-1}-\alpha_{2n-1}^2}
\end{equation}
and
\begin{equation}\label{eq:sufproof5P}
\alpha_{2N+1}^3a_{2n}<t_{2n+1}
\end{equation}
for all $n\in\{1,\ldots,N\}$ (the estimation provided by \eqref{eq:sufproof5P} is the essential auxiliary tool that makes the proof work, cf. also Remark~\ref{rem:sufficientcriterion}). We start with $t_1:=\alpha_{2N+1}$ and $t_3:=\alpha_{2N+1}-\alpha_2^2t_1/(\alpha_{2N+1}t_1-\alpha_1^2)$. Since $\alpha_{2N+1}t_1-\alpha_1^2=\alpha_{2N+1}^2-\alpha_1^2>0$ (Lemma~\ref{lma:sufficientcriterion2}), $t_3$ is well-defined. Moreover, we have
\begin{equation*}
t_3-\alpha_{2N+1}^3a_2=\frac{\alpha_{2N+1}(1-\alpha_{2N+1}^2)(\alpha_{2N+1}^2a_2-c_1-c_2a_1)}{\alpha_{2N+1}^2-\alpha_1^2}
\end{equation*}
(short calculation which again uses that the recurrence coefficients $a_n$ and $c_n$ always sum up to $1$, which will be used without further explanation in the following) and
\begin{align*}
\alpha_{2N+1}^2a_2-c_1-c_2a_1&\geq\alpha_3^2a_2-c_1-c_2a_1\\
&=c_3a_2^2-c_1-c_2a_1\\
&=(1-s_3)(1-s_2)^2-(1-s_1)-s_2s_1\\
&>0
\end{align*}
by \eqref{eq:sufadd1} and Lemma~\ref{lma:sufficientcriterion2}. This shows \eqref{eq:sufproof4P} and \eqref{eq:sufproof5P} for $n=1$. Now let $n\in\{1,\ldots,N-1\}$ be arbitrary but fixed, and assume that $t_{2n-1}$ and $t_{2n+1}$ are such that \eqref{eq:sufproof4P} and \eqref{eq:sufproof5P} are satisfied for $n$. We then define $t_{2n+3}:=\alpha_{2N+1}-\alpha_{2n+2}^2t_{2n+1}/(\alpha_{2N+1}t_{2n+1}-\alpha_{2n+1}^2)$. The well-definedness is a consequence of the assumption that \eqref{eq:sufproof5P} is satisfied for $n$ combined with \eqref{eq:sufproof2P} (Lemma~\ref{lma:sufficientcriterion3}): in fact, we can estimate
\begin{equation*}
\alpha_{2N+1}t_{2n+1}-\alpha_{2n+1}^2>\alpha_{2N+1}^4a_{2n}-\alpha_{2n+1}^2
\end{equation*}
and therefore
\begin{equation}\label{eq:sufproofadd1P}
\alpha_{2N+1}t_{2n+1}-\alpha_{2n+1}^2>0.
\end{equation}
By construction, it is clear that \eqref{eq:sufproof4P} is valid for $n+1$. It remains to show that \eqref{eq:sufproof5P} holds true for $n+1$, too, which can be seen from \eqref{eq:sufproof1P} as follows: by \eqref{eq:sufproof1P}, we know that
\begin{equation}\label{eq:sufproofadd2P}
\alpha_{2N+1}^2-\alpha_{2N+1}^4a_{2n+2}-\alpha_{2n+2}^2>0
\end{equation}
and
\begin{equation*}
\frac{\alpha_{2N+1}(1-\alpha_{2N+1}^2a_{2n+2})\alpha_{2n+1}^2}{\alpha_{2N+1}^2-\alpha_{2N+1}^4a_{2n+2}-\alpha_{2n+2}^2}<\alpha_{2N+1}^3a_{2n},
\end{equation*}
so
\begin{equation*}
\frac{\alpha_{2N+1}(1-\alpha_{2N+1}^2a_{2n+2})\alpha_{2n+1}^2}{\alpha_{2N+1}^2-\alpha_{2N+1}^4a_{2n+2}-\alpha_{2n+2}^2}<t_{2n+1}.
\end{equation*}
Taking into account \eqref{eq:sufproofadd1P} and \eqref{eq:sufproofadd2P} again, we can reformulate the latter as
\begin{equation*}
\alpha_{2N+1}^3a_{2n+2}<\alpha_{2N+1}-\frac{\alpha_{2n+2}^2t_{2n+1}}{\alpha_{2N+1}t_{2n+1}-\alpha_{2n+1}^2}=t_{2n+3}.
\end{equation*}
Before proceeding with the second step, we emphasize that our construction particularly implies that
\begin{equation}\label{eq:fromstep2P}
t_3,t_5,\ldots,t_{2N+1}\in(0,\alpha_{2N+1}).
\end{equation}

\textit{Step 2:} as a consequence of \eqref{eq:fromstep2P}, we have
\begin{equation*}
t_{2n}:=\frac{\alpha_{2n}^2}{\alpha_{2N+1}-t_{2n+1}}>0
\end{equation*}
for all $n\in\{1,\ldots,N\}$. Moreover, for all $n\in\{1,\ldots,N\}$ this construction and \eqref{eq:sufproof4P} yield
\begin{equation*}
t_{2n+1}=\alpha_{2N+1}-\frac{\alpha_{2n}^2}{t_{2n}}
\end{equation*}
and
\begin{equation*}
t_{2n}=\frac{\alpha_{2n}^2}{\alpha_{2N+1}-\left(\alpha_{2N+1}-\frac{\alpha_{2n}^2t_{2n-1}}{\alpha_{2N+1}t_{2n-1}-\alpha_{2n-1}^2}\right)}=\alpha_{2N+1}-\frac{\alpha_{2n-1}^2}{t_{2n-1}}.
\end{equation*}
In conclusion, we have shown that $t_1,\ldots,t_{2N+1}>0$ with $t_1=\alpha_{2N+1}$ and
\begin{equation*}
t_{n+1}=\alpha_{2N+1}-\frac{\alpha_n^2}{t_n}
\end{equation*}
for all $n\in\{1,\ldots,2N\}$. By Theorem~\ref{thm:el03}, this proves that the tridiagonal matrix
\begin{equation*}
\begin{pmatrix} \alpha_{2N+1} & \alpha_1 & \vphantom{ddots} & \phantom{ddots} & (0) \\ \alpha_1 & \alpha_{2N+1} & \alpha_2 & \phantom{ddots} & \phantom{ddots} \\ \phantom{ddots} & \alpha_2 & \ddots & \ddots & \phantom{ddots} \\ \phantom{ddots} & \phantom{ddots} & \ddots & \phantom{\ddots} & \alpha_{2N} \\ (0) & \phantom{ddots} & \phantom{ddots} & \alpha_{2N} & \alpha_{2N+1} \end{pmatrix}
\end{equation*}
is positive definite. Since $N$ was arbitrary, it follows from Theorem~\ref{thm:ms01} (ii) that $(P_n(x))_{n\in\mathbb{N}_0}$ satisfies nonnegative linearization of products.\\

It remains to show that $(\widetilde{P_n}(x))_{n\in\mathbb{N}_0}$ satisfies nonnegative linearization of products, too. We proceed similarly as above but give details because there are some relevant differences and we want to highlight the role of the various inequalities \eqref{eq:suf1}, \eqref{eq:suf2} and \eqref{eq:sufadd1}, \eqref{eq:sufadd2}. Note that the proof has not made use of \eqref{eq:sufadd2} so far. Let $N\in\mathbb{N}$ be arbitrary but fixed again.\\

\textit{Step 1:} we now show the existence of $t_2,t_4,\ldots,t_{2N}\in(0,\widetilde{\alpha_{2N}})$ with
\begin{equation*}
t_2=\widetilde{\alpha_{2N}}-\frac{\widetilde{\alpha_1}^2}{\widetilde{\alpha_{2N}}}
\end{equation*}
and, in the case of $N\geq2$,
\begin{equation}\label{eq:sufproof4Q}
t_{2n+2}=\widetilde{\alpha_{2N}}-\frac{\widetilde{\alpha_{2n+1}}^2t_{2n}}{\widetilde{\alpha_{2N}}t_{2n}-\widetilde{\alpha_{2n}}^2}
\end{equation}
and
\begin{equation}\label{eq:sufproof5Q}
\widetilde{\alpha_{2N}}^3c_{2n+1}<t_{2n+2}
\end{equation}
for all $n\in\{1,\ldots,N-1\}$. Set $t_2:=\widetilde{\alpha_{2N}}-\widetilde{\alpha_1}^2/\widetilde{\alpha_{2N}}$ and $t_4:=\widetilde{\alpha_{2N}}-\widetilde{\alpha_3}^2t_2/(\widetilde{\alpha_{2N}}t_2-\widetilde{\alpha_2}^2)$. By \eqref{eq:suf1} and Lemma~\ref{lma:sufficientcriterion2}, we have $t_2\in(0,\widetilde{\alpha_{2N}})$. The well-definedness of $t_4$ can be seen as follows: due to \eqref{eq:sufadd2}, we can estimate
\begin{equation*}
(1-s_4)(1-s_3)>s_1s_2+\frac{1-s_2+s_2s_3}{1-s_3}>s_1s_2+1-s_2.
\end{equation*}
The latter implies that
\begin{align*}
\widetilde{\alpha_4}^2-\widetilde{\alpha_1}^2-\widetilde{\alpha_2}^2&=a_4c_3-a_1-a_2c_1\\
&=(1-s_4)(1-s_3)-s_1-(1-s_2)(1-s_1)\\
&=(1-s_4)(1-s_3)-s_1s_2-1+s_2\\
&>0.
\end{align*}
Hence, via Lemma~\ref{lma:sufficientcriterion2} we obtain that
\begin{equation}\label{eq:sufproof6Q}
\widetilde{\alpha_{2N}}t_2-\widetilde{\alpha_2}^2=\widetilde{\alpha_{2N}}^2-\widetilde{\alpha_1}^2-\widetilde{\alpha_2}^2>0,
\end{equation}
and thus $t_4$ is well-defined. By construction, \eqref{eq:sufproof4Q} is true for $n=1$. We now show that also \eqref{eq:sufproof5Q} is true for $n=1$: in fact, we have
\begin{equation*}
t_4-\widetilde{\alpha_{2N}}^3c_3=\frac{(1-\widetilde{\alpha_{2N}}^2)(\widetilde{\alpha_{2N}}^2(c_3\widetilde{\alpha_{2N}}^2+a_1a_3c_2-a_1-a_2c_1-a_3c_2)+a_1a_3c_2)}{\widetilde{\alpha_{2N}}(\widetilde{\alpha_{2N}}^2-\widetilde{\alpha_1}^2-\widetilde{\alpha_2}^2)}
\end{equation*}
and
\begin{align*}
&c_3\widetilde{\alpha_{2N}}^2+a_1a_3c_2-a_1-a_2c_1-a_3c_2\\
&\geq c_3\widetilde{\alpha_4}^2+a_1a_3c_2-a_1-a_2c_1-a_3c_2\\
&=a_4c_3^2+a_1a_3c_2-a_1-a_2c_1-a_3c_2\\
&=(1-s_4)(1-s_3)^2+s_1s_2s_3-s_1-(1-s_2)(1-s_1)-s_2s_3\\
&>0
\end{align*}
due to \eqref{eq:sufadd2} and Lemma~\ref{lma:sufficientcriterion2}. Combined with \eqref{eq:sufproof6Q}, the latter estimation yields the assertion. In particular, $t_4$ is an element of $(0,\widetilde{\alpha_{2N}})$. The remaining part of this step works similarly as Step 1 above: let $n\in\{1,\ldots,N-2\}$ be arbitrary but fixed, and let $t_{2n}$ and $t_{2n+2}$ be such that \eqref{eq:sufproof4Q} and \eqref{eq:sufproof5Q} are satisfied for $n$. We define $t_{2n+4}:=\widetilde{\alpha_{2N}}-\widetilde{\alpha_{2n+3}}^2t_{2n+2}/(\widetilde{\alpha_{2N}}t_{2n+2}-\widetilde{\alpha_{2n+2}}^2)$ and can conclude from the induction hypothesis and \eqref{eq:sufproof1Q}, \eqref{eq:sufproof2Q} that $t_{2n+4}$ is a well-defined element of $(0,\widetilde{\alpha_{2N}})$ and that \eqref{eq:sufproof4Q} and \eqref{eq:sufproof5Q} are fulfilled for $n+1$.\\

\textit{Step 2:} since $t_2,t_4,\ldots,t_{2N}\in(0,\widetilde{\alpha_{2N}})$, we get
\begin{equation*}
t_{2n-1}:=\frac{\widetilde{\alpha_{2n-1}}^2}{\widetilde{\alpha_{2N}}-t_{2n}}>0
\end{equation*}
for all $n\in\{1,\ldots,N\}$, and the construction yields
\begin{equation*}
t_{n+1}=\widetilde{\alpha_{2N}}-\frac{\widetilde{\alpha_n}^2}{t_n}
\end{equation*}
for all $n\in\{1,\ldots,2N-1\}$. Furthermore, we have
\begin{equation*}
t_1=\frac{\widetilde{\alpha_1}^2}{\widetilde{\alpha_{2N}}-t_2}=\frac{\widetilde{\alpha_1}^2}{\widetilde{\alpha_{2N}}-\left(\widetilde{\alpha_{2N}}-\frac{\widetilde{\alpha_1}^2}{\widetilde{\alpha_{2N}}}\right)}=\widetilde{\alpha_{2N}}.
\end{equation*}
Therefore, we can apply Theorem~\ref{thm:ms01} (i) and Theorem~\ref{thm:el03} to conclude that $(\widetilde{P_n}(x))_{n\in\mathbb{N}_0}$ satisfies nonnegative linearization of products, too.
\end{proof}

\begin{remark}\label{rem:sufficientcriterion}
We want to make a comment concerning our constructions in the proof above.\footnote{In the following, we only refer to the construction concerning $(P_n(x))_{n\in\mathbb{N}_0}$, but the comment similarly applies to the construction concerning $(\widetilde{P_n}(x))_{n\in\mathbb{N}_0}$.} The reader might have wondered why we established the lower bound $t_{2n+1}>\alpha_{2N+1}^3a_{2n}$ provided by \eqref{eq:sufproof5P}, whereas $t_{2n+1}>0$ eventually would have been enough. The reason is that the induction hypothesis in Step 1 essentially benefited from the stronger estimation, or in other words: it was much more convenient to show an implication of the form $t_{2n+1}>\alpha_{2N+1}^3a_{2n}\Rightarrow t_{2n+3}>\alpha_{2N+1}^3a_{2n+2}$ instead of dealing with an implication of the form $t_{2n+1}>0\Rightarrow t_{2n+3}>0$. We also emphasize that the remark concerning the usage of computer algebra systems (Maple) at the end of Section~\ref{sec:intro} particularly refers to finding estimations like \eqref{eq:sufproof5P} and its counterpart \eqref{eq:sufproof5Q} via numerical experiments.
\end{remark}

\begin{proof}[Proof of Theorem~\ref{thm:actualresult}]
\begin{enumerate}[(i)]
\item A possible construction via Theorem~\ref{thm:sufficientcriterion} works as follows: consider
\begin{equation*}
s_n:=\frac{C^{\prime}}{K^{n-1}}
\end{equation*}
with $C^{\prime}:=\min\{C,1-C\}$. Then
\begin{align*}
&1-\frac{s_1}{1-s_1}-s_2=\frac{(1-2C^{\prime})K+(C^{\prime})^2-C^{\prime}}{(1-C^{\prime})K},\\
&\frac{s_{n-2}}{2}-2s_{n-1}-s_n=\frac{C^{\prime}(K^2-4K-2)}{2K^{n-1}}\;(n\geq3),
\end{align*}
and $K\in\mathbb{N}$ can be chosen such that $(s_n)_{n\in\mathbb{N}}\subseteq(0,1)$ and the right hand sides become positive. Now define $(c_n)_{n\in\mathbb{N}}$ by \eqref{eq:cnfirst} if $C>1/2$ and \eqref{eq:cnsecond} if $C<1/2$.
\item By \cite[Proposition 3.1]{KS25}, $0\in\widehat{\mathbb{N}_0}$ yields $c_1\leq1/2$. In the same way, $0\in\widetilde{\widehat{\mathbb{N}_0}}$ yields $a_1\leq1/2$ and therefore $c_1\geq1/2$. In conclusion, $0\in\widehat{\mathbb{N}_0}\cap\widetilde{\widehat{\mathbb{N}_0}}$ yields $c_1=1/2$, so the assertion follows from Theorem~\ref{thm:lasserobermaier}.
\item This is also a consequence of Theorem~\ref{thm:lasserobermaier}: let $(P_n(x))_{n\in\mathbb{N}_0}$ be an RWPS such that both $(P_n(x))_{n\in\mathbb{N}_0}$ and $(\widetilde{P_n}(x))_{n\in\mathbb{N}_0}$ satisfy nonnegative linearization of products. If $(P_n(x))_{n\in\mathbb{N}_0}=(T_n(x))_{n\in\mathbb{N}}$ (and hence $(\widetilde{P_n}(x))_{n\in\mathbb{N}_0}=(T_n(x))_{n\in\mathbb{N}}$), then $\widehat{\mathbb{N}_0}=\widetilde{\widehat{\mathbb{N}_0}}=[-1,1]$. Moreover, it is clear from \eqref{eq:threetermrec} and \eqref{eq:threetermrecswitched} that $|P_{2n}(0)|=\prod_{k=1}^n c_{2k-1}/a_{2k-1}$ and $P_{2n+1}(0)=0$ for all $n\in\mathbb{N}_0$, whereas $|\widetilde{P_{2n}}(0)|=\prod_{k=1}^n a_{2k-1}/c_{2k-1}$ and $\widetilde{P_{2n+1}}(0)=0$ for all $n\in\mathbb{N}_0$. Hence, if $c_{2n+1}>c_1>1/2$ for all $n\in\mathbb{N}$, then $0\in\widetilde{\widehat{\mathbb{N}_0}}$, whereas if $c_{2n+1}<c_1<1/2$ for all $n\in\mathbb{N}$, then $0\in\widehat{\mathbb{N}_0}$.
\end{enumerate}
\end{proof}

\begin{proof}[Proof of Theorem~\ref{thm:haarnotnondecreasing}]
\begin{enumerate}[(i)]
\item Assume that both $(P_n(x))_{n\in\mathbb{N}_0}$ and $(\widetilde{P_n}(x))_{n\in\mathbb{N}_0}$ satisfy nonnegative linearization of products and that both $h$ and $\widetilde{h}$ are increasing. By \eqref{eq:hdef}, this implies that $1-c_n=a_n\geq c_{n+1}$ and $c_n\geq a_{n+1}=1-c_{n+1}$ for all $n\in\mathbb{N}$, so $c_n+c_{n+1}\equiv1$. The latter, however, implies hat $(c_n)_{n\in\mathbb{N}}$ oscillates between two values. By Theorem~\ref{thm:lasserobermaier}, this yields $c_n\equiv1/2$ and consequently $(P_n(x))_{n\in\mathbb{N}_0}=(T_n(x))_{n\in\mathbb{N}_0}$. The converse is clear.
\item Let $(P_n(x))_{n\in\mathbb{N}_0}$ be as in Theorem~\ref{thm:sufficientcriterion} (explicit examples are given above). Then, by \eqref{eq:hdef} and Lemma~\ref{lma:sufficientcriterion},
\begin{equation*}
\frac{h(2n)}{h(2n-1)}=\frac{a_{2n-1}}{c_{2n}}=\frac{1-s_{2n-1}}{1-s_{2n}}<1
\end{equation*}
for all $n\in\mathbb{N}$ if $(c_n)_{n\in\mathbb{N}}$ is as in \eqref{eq:cnsecond}, whereas
\begin{equation*}
\frac{h(2n+1)}{h(2n)}=\frac{a_{2n}}{c_{2n+1}}=\frac{1-s_{2n}}{1-s_{2n+1}}<1
\end{equation*}
for all $n\in\mathbb{N}$ if $(c_n)_{n\in\mathbb{N}}$ is as in \eqref{eq:cnfirst}.
\end{enumerate}
\end{proof}

\begin{proof}[Proof of Theorem~\ref{thm:haar2smaller2}]
A possible construction via Theorem~\ref{thm:sufficientcriterion} works as follows: let $\epsilon\in(0,1)$, and let
\begin{equation*}
s_n:=\begin{cases} \frac{K^2}{(2+\epsilon)\cdot K^2-(1+\epsilon)}, & n=1, \\ \frac{1}{K^n}, & n\geq2. \end{cases}
\end{equation*}
Then
\begin{align*}
&1-\frac{s_1}{1-s_1}-s_2=\frac{\epsilon K^4-(1+\epsilon)(2K^2-1)}{(1+\epsilon)K^2(K^2-1)},\\
&\frac{s_1}{2}-2s_2-s_3=\frac{K^5+\mathcal{O}(K^3)}{2K^3((2+\epsilon)K^2-(1+\epsilon))},\\
&\frac{s_{n-2}}{2}-2s_{n-1}-s_n=\frac{K^2-4K-2}{2K^n}\;(n\geq4),
\end{align*}
and $K\in\mathbb{N}$ can be chosen in such a way that $(s_n)_{n\in\mathbb{N}}\subseteq(0,1)$ and the right hand sides become positive. Now if $(c_n)_{n\in\mathbb{N}}$ is given by \eqref{eq:cnsecond}, we have
\begin{equation*}
h(2)=\frac{1-s_1}{s_1(1-s_2)}=1+\epsilon.
\end{equation*}
\end{proof}

\begin{remark}
With regard to the comments at the end of Section~\ref{sec:main}, we observe that if $\epsilon\in(0,1)$ and $K$, $(s_n)_{n\in\mathbb{N}}$ and $(c_n)_{n\in\mathbb{N}}$ are as in the proof above, then
\begin{equation*}
h(1)=\frac{1}{s_1}>2
\end{equation*}
(Lemma~\ref{lma:sufficientcriterion}). Furthermore, one has $0\in\widehat{\mathbb{N}_0}$, cf. Theorem~\ref{thm:actualresult}.
\end{remark}

\section{Further properties of the new polynomials}\label{sec:further}

We consider the RWPS which fit in our sufficient criterion Theorem~\ref{thm:sufficientcriterion} in some more detail and particularly deal with the orthogonalization measures. We show the following: if $(P_n(x))_{n\in\mathbb{N}_0}$ and $(\widetilde{P_n}(x))_{n\in\mathbb{N}_0}$ are as in Theorem~\ref{thm:sufficientcriterion}, then there exist strictly increasing sequences $(x_n)_{n\in\mathbb{N}},(\widetilde{x_n})_{n\in\mathbb{N}}\subseteq[0,1)$ with $\lim_{n\to\infty}x_n=\lim_{n\to\infty}\widetilde{x_n}=1$ and
\begin{align*}
\mathrm{supp}\;\mu&=\{\pm1\}\cup\{\pm x_n:n\in\mathbb{N}\},\\
\mathrm{supp}\;\widetilde{\mu}&=\{\pm1\}\cup\{\pm\widetilde{x_n}:n\in\mathbb{N}\}.
\end{align*}
It clearly suffices to show the assertion for $(P_n(x))_{n\in\mathbb{N}_0}$. We proceed similarly (but with slight differences) to \cite[Remark 2 p. 427]{MS01} and consider the operator $T_1^2-\id$ on $\ell^2(h)$, which is given by
\begin{equation*}
(T_1^2-\id)f(m)=\begin{cases} a_1(f(2)-f(0)), & m=0, \\ a_2a_1(f(3)-f(1)), & m=1, \\ a_{m+1}a_m(f(m+2)-f(m))-c_m c_{m-1}(f(m)-f(m-2)), & m\geq2. \end{cases}
\end{equation*}
Since $\lim_{m\to\infty}a_{m+1}a_m=\lim_{m\to\infty}c_m c_{m-1}=0$ (recall from Lemma~\ref{lma:sufficientcriterion} that the sequence $(s_n)_{n\in\mathbb{N}}$ is a null sequence), $T_1^2-\id$ is compact. Hence, the (infinite) set
\begin{equation*}
\{x^2-1:x\in\mathrm{supp}\;\mu\}=\sigma(T_1^2-\id)
\end{equation*}
(which is a subset of $[-1,0]$) is of the form $\{0\}\cup\{y_n:n\in\mathbb{N}\}$ for some sequence $(y_n)_{n\in\mathbb{N}}\in[-1,0)$ with $\lim_{n\to\infty}y_n=0$. This yields the assertion.\\

Therefore, concerning the orthogonalization measures of RWPS such that both $(P_n(x))_{n\in\mathbb{N}_0}$ and $(\widetilde{P_n}(x))_{n\in\mathbb{N}_0}$ satisfy nonnegative linearization of products the examples provided by Theorem~\ref{thm:sufficientcriterion} are very different from the trivial example $(T_n(x))_{n\in\mathbb{N}_0}$ because for the latter one has an absolutely continuous measure with $\mathrm{supp}\;\mu=[-1,1]$. Nevertheless, the RWPS which fit in Theorem~\ref{thm:sufficientcriterion} share an important property with $(T_n(x))_{n\in\mathbb{N}_0}$, namely that the three dual objects coincide. More precisely, we have
\begin{equation*}
\mathrm{supp}\;\mu=\widehat{\mathbb{N}_0}=\mathcal{X}^b(\mathbb{N}_0)
\end{equation*}
and
\begin{equation*}
\mathrm{supp}\;\widetilde{\mu}=\widetilde{\widehat{\mathbb{N}_0}}=\widetilde{\mathcal{X}^b(\mathbb{N}_0)}.
\end{equation*}
This can be seen as follows: first, again it suffices to prove the assertion only with regard to $(P_n(x))_{n\in\mathbb{N}_0}$. Now we take an argument from the proof of \cite[Theorem 3.2 (iii)]{KS25} and \cite[Remark 3.3]{KS25} and proceed as follows (we restrict ourselves to a sketch and refer to \cite{KS25} for the details): motivated by \cite[Section 6]{MS01}, define $(R_n(x))_{n\in\mathbb{N}_0}\subseteq\mathbb{R}[x]$ via $R_n(x^2)=P_{2n}(x)$. $(R_n(x))_{n\in\mathbb{N}_0}$ fulfills $R_0(x)=1$, $R_1(x)=(x-c_1)/a_1$, $R_1(x)R_n(x)=a_n^R R_{n+1}(x)+b_n^R R_n(x)+c_n^R R_{n-1}(x)\;(n\in\mathbb{N})$ with
\begin{equation*}
a_n^R:=\frac{a_{2n}a_{2n+1}}{a_1},b_n^R:=\frac{a_{2n}c_{2n+1}+c_{2n}a_{2n-1}-c_1}{a_1},c_n^R:=\frac{c_{2n}c_{2n-1}}{a_1},
\end{equation*}
and as $(P_n(x))_{n\in\mathbb{N}_0}$ satisfies nonnegative linearization of products, $(R_n(x))_{n\in\mathbb{N}_0}$ satisfies nonnegative linearization of products, too. Since $\lim_{n\to\infty}a_n^R=\lim_{n\to\infty}c_n^R=0$ and $\lim_{n\to\infty}b_n^R=1$ (Lemma~\ref{lma:sufficientcriterion}), the resulting polynomial hypergroup is of `strong compact type' \cite[Proposition 4]{FLS05}. If $\mu_R$ denotes the corresponding orthogonalization measure, then
\begin{equation*}
\{1\}\cup\{x_n^2:n\in\mathbb{N}\}=\mathrm{supp}\;\mu_R=\left\{z\in\mathbb{C}:\max_{n\in\mathbb{N}_0}|R_n(z)|=1\right\},
\end{equation*}
where the first equality is clear from construction and the second equality follows from \cite[Theorem 2]{FLS05}. Hence, we get
\begin{equation*}
\mathcal{X}^b(\mathbb{N}_0)\subseteq\left\{z\in\mathbb{C}:\max_{n\in\mathbb{N}_0}|\underbrace{P_{2n}(z)}_{=R_n(z^2)}|=1\right\}=\{\pm1\}\cup\{\pm x_n:n\in\mathbb{N}\}=\mathrm{supp}\;\mu\subseteq\widehat{\mathbb{N}_0}\subseteq\mathcal{X}^b(\mathbb{N}_0).
\end{equation*}

\bibliography{jointworkjosefstefan}
\bibliographystyle{amsplain}

\end{document}